\documentclass[]{interact}
\usepackage{latexsym}
\usepackage{anyfontsize} 
\usepackage{tikz} 
\usetikzlibrary{cd} 
\usepackage{color}
\usepackage{hyperref}
\usepackage{url}
\usepackage{breakurl}
\newcommand{\bburl}[1]{\textcolor{blue}{\url{#1}}}

\makeatletter
\newcommand{\monthyear}[1]{%
  \def\@monthyear{\uppercase{#1}}}
\newcommand{\volnumber}[1]{%
  \def\@volnumber{\uppercase{#1}}}
\makeatother

\theoremstyle{plain}
\numberwithin{equation}{section} 
\newtheorem{thm}{Theorem}[section] 
\newtheorem{theorem}[thm]{Theorem}
\newtheorem{lemma}[thm]{Lemma}
\newtheorem{example}[thm]{Example}

\newtheorem{conjecture}[thm]{Conjecture}

\newtheoremstyle{italicremark} 
  {3pt}   
  {3pt}   
  {\itshape}  
  {}      
  {\bfseries} 
  {.}     
  { }     
  {}      

\theoremstyle{italicremark}
\newtheorem{remark}[thm]{Remark}

\numberwithin{table}{section} 
\numberwithin{figure}{section}

\usepackage{tikz,lipsum,lmodern}
\usepackage[most]{tcolorbox}

\begin{document}

\monthyear{Month Year}
\volnumber{Volume, Number}
\setcounter{page}{1}

\title{Fibonacci Numbers and Vieta Jumping for a Rational Diophantine Equation}

\author{
\name{Steven J. Miller\textsuperscript{a}, Dimitrios Nikolakopoulos\textsuperscript{b} and Anitha Srinivasan\textsuperscript{c}\thanks{Email address: \textsuperscript{a}sjm1@williams.edu, \textsuperscript{b}bko25001@uconn.edu and  \textsuperscript{c}rsrinivasananitha@gmail.com}}
\affil{\textsuperscript{a}Department of Mathematics,
                Williams College,
                Williamstown, MA 01267, USA;
        \textsuperscript{b}Department of Mathematics,
                Univerity of Connecticut,
                Storrs, CT 06269, USA;
                \\
      	\textsuperscript{c}Department of Mathematics,
                Universidad Pontificia de Comillas, ICADE, Alberto Aguilera, 23, 28015 Madrid, Spain}
}

\maketitle

{\bf Article type}: research 
\bigskip

\begin{abstract}
We study the Diophantine equation $\displaystyle{\tfrac{a+1}{b} + \tfrac{b+1}{a} \ = \ k}$, where $k$ is an integer.
Using Vieta jumping, we completely classify all positive integer pairs $(a, \, b)$.
We prove that the associated integer value $k$ can only be $3$ or $4$.
The corresponding solution pairs $(a,\,b)$ are related to the classical Fibonacci numbers.
As a consequence, the quantity $\frac{a+b}{\gcd(a, \,b)^2}$ takes only the values $1, \, 2, \, 3$ and $5$.
This reveals an unexpected connection between a simple rational Diophantine condition, Vieta jumping, and Fibonacci numbers.
\end{abstract}




\section{Introduction}
We study positive integer solutions $(a, \,b)$ of
\begin{equation}\label{eq:main_equation}
\frac{a+1}{b} \, + \, \frac{b+1}{a} \ = \ k,
\qquad a, \, b \in \mathbb{Z}_{>0},
\end{equation}
where $k$ is an integer. Related Diophantine equations have been studied in \cite{lemmermeyer2026vietajumpingsmallnorms}.
It is known from the OEIS~\cite{OEIS} that all positive integer solutions for $k \ = \ 3$ and $k \ = \ 4$ are described by the sequences A032908 and A101879, respectively.
More precisely, consecutive terms of A032908 give all solutions for $k \ = \ 3$, while consecutive terms of A101879 give all solutions for $k \ = \ 4$.

Using the Fibonacci sequence
\begin{equation}
F_{-1} \ = \ 1, \qquad
F_{0} \ = \ 0, \qquad
F_{1} \ = \ 1, \qquad
F_{n} \ = \ F_{n-1} \,+\, F_{n-2}
\quad (n \geqslant 2),
\end{equation}
together with Vieta's formulas: if $x_{1}, \, x_{2}$ are the roots of the quadratic polynomial $x^{2} \,-\, Sx \,+\, P,$ then $x_{1} \,+\, x_{2} \ = \ S$ and $x_{1}x_{2} \ = \ P$,
we prove that $k$ can take only the values $3$ and $4$, and we determine the complete structure of the solutions of~\eqref{eq:main_equation}.
The Fibonacci description of the solutions for $k \ = \ 3$ is also mentioned in the OEIS~\cite{OEIS}, where A032908 is identified with the sequence $\{F_{2n-1}+1\}_{n\geqslant 0}$. However, we are not aware of a complete proof in the literature.
We also study the associated greatest common divisor structure of the solutions, which appears to be new to the best of our knowledge.

A central tool in our analysis is the method of \emph{Vieta jumping}, which constructs new integer solutions of a quadratic Diophantine equation from existing ones by replacing one root with the other root determined by Vieta's formulas.
Starting from a positive integer solution, one ``jumps'' to another solution sharing one coordinate with the original pair, hence the name \emph{Vieta jumping}. For further background on Vieta jumping, see \cite{engel_problem_solving, GEY, stevens_olympiad_nt, wu_vieta2024}.

\medskip

We first define two operations to solutions $(a, \, b)$ to \eqref{eq:main_equation} that generate new solutions: flipping and Vieta jumping; it is trivial to see these are solutions by direct substitution.
\begin{itemize}\itemsep0.5em
\item Flipping: The flip of $(a, \, b)$ is the pair $(b, \, a)$, and is also a solution.

\item Vieta Jumping: The Vieta jumps of $(a, \, b)$ are $(kb-1-a, \, b)$ and $(a, \, ka-1-b)$, and both are also solutions.
\end{itemize}

\begin{theorem} \label{thm:main}
Let $a, \, b\in\mathbb{Z}_{>0}$ be such that
\begin{equation} \label{equationTHM1.1}
\frac{a+1}{b} \; + \; \frac{b+1}{a} \ = \ k
\end{equation}
where $k$ is an integer.
Then $k$ is $3$ or $4$.
Any solution $(a, \, b)$ with distinct coordinates comes from a solution with minimal coordinate sum through a finite sequence of Vieta jumps and flips.
Conversely, any such sequence starting from a solution with minimal coordinate sum produces solutions with distinct coordinates.

\begin{enumerate}
\item \label{propert(i)} If $k \ = \ 3$: the solution with the minimal coordinate sum is $(2, \, 2)$, all other solutions are generated by repeated applications of Vieta jumps and flips. Moreover, every solution $(a, \, b)$ with $a \, \leqslant \, b$  is of the form
\begin{equation}
a \ = \ F_{2n-1}\,+\,1 , \quad b \ = \ F_{2n+1}\,+\,1 \quad \text{ for some } n \geqslant0,
\end{equation}
where $\big \{ F_{n} \big \}_{n \geqslant -1}$ denotes the Fibonacci sequence.

\item \label{propert(ii)} If $k \ = \ 4$: the solution with the minimal coordinate sum is $(1,\,1)$, every solution is obtained from $(1,\,1)$ by repeated application of Vieta jumps and flips.
\end{enumerate}
\end{theorem}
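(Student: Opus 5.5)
Clearing denominators, \eqref{equationTHM1.1} becomes
\begin{equation*}
a^{2} + b^{2} + a + b \ = \ kab ,
\end{equation*}
so for fixed $b$ the number $a$ is a root of $x^{2} - (kb-1)x + (b^{2}+b) = 0$. The other root is $a' = kb-1-a = (b^{2}+b)/a$. It is an integer by the first expression and positive by the second. This justifies the Vieta jump. The plan is the standard descent. Among all positive solutions with a given $k$, take one with minimal $a+b$, and by flipping assume $a \geqslant b$.

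\textbf{Step 1: descent and minimal solutions.} Suppose $a > b$. Then $a' = b(b+1)/a$. If $a \geqslant b+2$ then $a' < b \leqslant a$, so $(a',b)$ has smaller sum, a contradiction. Hence a minimal solution has $a=b$ or $a=b+1$.
\begin{itemize}
\item If $a=b$, the equation gives $2b^{2}+2b = kb^{2}$, so $b \mid 2$. Thus $(1,1)$ with $k=4$, or $(2,2)$ with $k=3$.
\item If $a=b+1$, then $a'=b$, and the equation reads $2b^{2}+4b+2 = k(b^{2}+b)$. So $b \mid 2$ and $(b+1) \mid 2$, giving $b=1$, $(a,b)=(2,1)$. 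But $3/1+2/2=4$, so $k=4$, and this sum is not minimal since $(1,1)$ is also a solution with $k=4$.
\end{itemize}
Since every $k$ admitting a solution has a minimal one, $k\in\{3,4\}$. The minimal solutions are $(2,2)$ for $k=3$ and $(1,1)$ for $k=4$. These are unique, since for each $k$ only one diagonal solution exists.

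\textbf{Step 2: generation.} Take any solution with $a > b$. By the descent step, either $a \geqslant b+2$, where jumping gives a solution with strictly smaller sum, or $a=b+1$, where $a'=b$ and we land on a diagonal solution, necessarily the minimal one. Induct on $a+b$: repeated jumps and flips reach the minimal solution in finitely many steps. Reversing the sequence gives the claim, since jumps and flips are involutions.

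For the converse, start at the minimal solution and apply the ``upward'' jump $(a,b)\mapsto(kb-1-a,b)$ with $a\leqslant b$. The new coordinate $kb-1-a \geqslant 3b-1-b = 2b-1 \geqslant b$, with equality only if $b=1$. I would check that from $(2,2)$ and $(1,1)$ the chain $(2,2)\to(2,3)\to(3,5)\to\dots$ and $(1,1)\to(1,2)\to(2,5)\to\dots$ strictly increases, so all coordinates beyond the start are distinct. I expect the main obstacle to be this wording. The precise sense of ``produces solutions with distinct coordinates'' must be pinned down as non-backtracking sequences, since a jump can return to the minimal point. I would handle it by arguing that every non-trivial reduced sequence of moves (no immediate undoing) moves up the chain.

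\textbf{Step 3: Fibonacci form for $k=3$.} The ordered solutions with $a\leqslant b$ form the chain $(b_{n-1},b_n)\mapsto(b_n,\,3b_n-1-b_{n-1})$, which is the recurrence $b_{n+1}=3b_n-b_{n-1}-1$. Set $c_n=b_n-1$. This gives $c_{n+1}=3c_n-c_{n-1}$, with $c$ starting at $F_{-1}=1$, $F_1=1$. The odd-index Fibonacci numbers satisfy $F_{2n+3}=3F_{2n+1}-F_{2n-1}$, which follows from $F_{m+2}=3F_m-F_{m-2}$. So by induction $b_n = F_{2n-1}+1$, and the pairs are $(F_{2n-1}+1,\,F_{2n+1}+1)$ for $n\geqslant 0$.
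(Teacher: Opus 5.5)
Your overall structure is the paper's. You run a Vieta-jump descent to a diagonal solution, use the diagonal equation $kb=2b+2$ to force $k\in\{3,4\}$, and reverse the involutive moves. Unlike the paper, you need no separate AM--GM bound $k>2$.

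Your Fibonacci identification is a different route. The paper solves the shifted recurrence by Binet's formula and matches closed forms. You instead check that $b_n-1$ and $F_{2n-1}$ satisfy the same recurrence $x_{n+1}=3x_n-x_{n-1}$ and agree at $n=0,1$. The recurrence for $F_{2n-1}$ comes from $F_{m+2}=3F_m-F_{m-2}$, valid for $m\geqslant 1$ with $F_{-1}=1$. Your route is shorter and stays in the integers. The paper's route also yields explicit closed forms, which this statement does not need.

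There are, however, concrete errors to fix.

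\textbf{Step 1, case $a=b+1$.} This case reduces to $2(b+1)^2=kb(b+1)$, i.e.\ $kb=2(b+1)$. That gives only $b\mid 2$; nothing forces $(b+1)\mid 2$. So $b=2$ survives and yields $(3,2)\in S_3$, which your enumeration misses. The conclusion still holds, since that sum is $5>4$. The correct argument is the one you half-wrote: if $a=b+1$ then $a'=b$, so $(b,b)$ is a solution of sum $2b<2b+1$, contradicting minimality. Better still, drop the case split as the paper does: for every $a\geqslant b+1$ one has $a'=b(b+1)/a\leqslant b<a$.

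\textbf{Step 2, sample chains.} Your chains are wrong: $(3,5)$ and $(2,5)$ are not solutions (e.g.\ $\tfrac{4}{5}+\tfrac{6}{3}\neq 3$). The actual chains are $(2,2),(2,3),(3,6),(6,14),\dots$ and $(1,1),(1,2),(2,6),(6,21),\dots$. This matches your own Step 3, which gives $F_5+1=6$.

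\textbf{Step 2, the converse.} You are right that the converse fails for arbitrary move sequences. But ``no immediate undoing'' is the wrong restriction, because the flip conjugates one jump into the other. Let $J_1(a,b)=(kb-1-a,b)$, $J_2(a,b)=(a,ka-1-b)$ and $F(a,b)=(b,a)$; then $J_2=FJ_1F$. The sequence $(2,2)\mapsto(3,2)\mapsto(2,3)\mapsto(2,2)$ via $J_1$, $F$, $J_2$ never immediately undoes a move, yet returns to the start. There are two ways to fix this:
\begin{enumerate}
\item Work only with words alternating between $J_1$ and $F$. These move monotonically up the chain from the minimal solution.
\item As the paper's remark does, apply only the upward jump to the smaller coordinate. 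Then your bound $kb-1-a\geqslant 2b-1\geqslant b$ suffices, since equality throughout would require $(1,1)\in S_3$, which is false.
\end{enumerate}
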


Combining denominators in \eqref{eq:main_equation} we obtain $a^{2} \,+\, a \,+\, b^{2} \,+\, b \ = \ k a b$. Since $\gcd(a, \, b)^{2}$ divides $a^{2}\,+\,b^{2}\,+\,a\,+\,b$, as the sum $k$ is an integer, thus $\gcd(a, \, b)^{2}$ divides $a\,+\,b$ as it divides $a^{2}$ and $b^{2}$.
Thus, for the solutions $(a, \, b)$ described in Theorem~\ref{thm:main}, the quantity
\begin{equation}\label{eq:normalized_sum}
\frac{a+b}{\gcd(a,\,b)^2}
\end{equation}
is an integer. We prove it takes only a small set of values.

\begin{theorem}\label{cor:normalized_values}
For every solution $(a,\,b)$ of \eqref{eq:main_equation}, we have
\begin{equation}
\frac{a+b}{\gcd(a, \,b)^2} \, \in \, \{ 1, \, 2, \, 3, \, 5 \},
\end{equation}
the first four Fibonacci numbers.
More precisely, the values $1$ and $5$ occur when $k \ = \ 3$, while the values $2$ and $3$ occur when $k \ = \ 4$.
\end{theorem}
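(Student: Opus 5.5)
The plan is to use the explicit classification in Theorem~\ref{thm:main} and show that along each Vieta-jumping chain every solution factors as a product of two auxiliary integers. Consecutive solutions share exactly one factor, and that shared factor turns out to be the gcd. By Theorem~\ref{thm:main} and the symmetry under flips, it suffices to treat pairs $(a,b)=(u_n,u_{n+1})$ of consecutive terms of the sequence defined by $u_{n+1}=ku_n-u_{n-1}-1$. This recurrence is exactly the Vieta jump $(u_{n-1},u_n)\mapsto(u_n,ku_n-1-u_{n-1})$ followed by a flip. The seeds are $u_0=u_1=2$ for $k=3$ and $u_0=u_1=1$ for $k=4$.

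\textbf{Step 1 (factorization).} The first terms suggest the factorizations. For $k=3$ the terms $2,3,6,14,35,\dots$ equal $F_{2n-1}+1$, and the classical identities $F_{2m-1}+1=F_mL_{m-1}$ and $F_{2m-1}+1=F_{m-1}L_m$ hold according to the parity of $m$; for instance $6=2\cdot3$, $14=2\cdot7$, $35=5\cdot7$. For $k=4$ the terms $1,1,2,6,21,77,\dots$ factor as $1\cdot1,\ 1\cdot2,\ 2\cdot3,\ 3\cdot7,\ 7\cdot11,\dots$. So I would introduce a sequence $(s_n)$ with $u_n=s_ns_{n+1}$, namely $s=1,1,2,3,7,11,26,41,\dots$ for $k=4$, interleaving two Pell-type sequences attached to $2\pm\sqrt3$. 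I would guess the recurrences for $s$, for example $s_{n+2}=ps_n-s_{n-2}$ on each parity class, and verify by induction that $s_ns_{n+1}$ satisfies $u_{n+1}=ku_n-u_{n-1}-1$. Equivalently, I would check that $(s_ns_{n+1},\,s_{n+1}s_{n+2})$ satisfies $a^2+a+b^2+b=kab$ using a Cassini-type identity for $s$.

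\textbf{Step 2 (gcd).} Next I would show $\gcd(s_n,s_{n+2})=1$ and $\gcd(s_n,s_{n+1})=1$. Both follow from the linear recurrences by the usual Euclidean descent, or from the Cassini-type identities, since these express $1$ as an integer combination of the relevant terms. It then follows that $\gcd(u_n,u_{n+1})=s_{n+1}$.

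\textbf{Step 3 (the quotient).} Finally I would compute
\[
\frac{a+b}{\gcd(a,b)^2}=\frac{s_n+s_{n+2}}{s_{n+1}},
\]
and show from the recurrence that $s_n+s_{n+2}=c\,s_{n+1}$. Here $c$ depends only on the parity of $n$; this is the standard fact that for interleaved Lucas-type sequences the sum of neighbours is a fixed multiple of the middle term. Reading $c$ off from small $n$ gives $\{1,5\}$ for $k=3$, from $(3,6)\to1$ and $(2,3)\to5$, and $\{2,3\}$ for $k=4$, from $(6,21)\to3$ and $(21,77)\to2$. The degenerate starting pairs $(2,2)$ and $(1,1)$ give $1$ and $2$ directly.

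The main obstacle is Step 1 for $k=4$: there is no ready-made Fibonacci identity, so the right two interleaved sequences and their recurrence have to be pinned down. I would first try defining $s$ by $s_{n+1}=u_n/s_n$ and proving by strong induction, with the three-term identity of Step 3 as part of the inductive hypothesis, that $s_{n+1}$ is an integer and that the $u$-recurrence holds. This bundles Steps 1 and 3 into a single induction.
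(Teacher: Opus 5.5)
Your proposal is correct and follows essentially the paper's route: the paper also takes $d_n=\gcd(a_n,a_{n+1})$ (your $s_{n+1}$), proves $a_{n+1}=d_nd_{n+1}$, reduces the quotient to $(d_{n-2}+d_n)/d_{n-1}$, and evaluates that by parity, using Fibonacci--Lucas identities for $k=3$, whereas for $k=4$ it simply cites OEIS A005246 for the factorization and Binet-type forms of $d_n$, so your induction via an alternating recurrence and a Cassini-type invariant would make that case self-contained. One small indexing slip: with $u_0=u_1=1$ the factor sequence must be $s=1,1,1,2,3,7,11,\dots$, since your listed $1,1,2,3,\dots$ gives $s_1s_2=2\neq u_1$.
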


The paper organized as follows.
In Section~\ref{sec:two_var_general_r} we prove Theorem~\ref{thm:main}. The proof is divided into two main parts. First, we classify the solutions with equal coordinates. 
Second, we show
that every solution with distinct coordinates can be reduced, via Vieta jumping and flipping, to another solution with strictly smaller coordinate sum. Iterating this process eventually leads to a solution with equal coordinates. We then prove that parts~\ref{propert(i)} and~\ref{propert(ii)}.
In Section~\ref{section: lasttheorem} we prove Theorem~\ref{cor:normalized_values}, separating the cases $k \ = \ 3$ and $k \ = \ 4$.
Finally, in Section~\ref{sec:limitations_open} we discuss limitations of the Vieta jumping method in further generalizations and related analogues, where integrality may fail, solutions with equal coordinates may not exist, and solutions may split into disjoint families not connected by Vieta jumps.

In addition to uncovering relationships that are of interest in their own right, a major purpose of this project is to mentor young mathematicians, showing them the excitement of discovering new results and guiding them in how to conduct original research (from gathering data to making conjectures to writing proofs). Not surprisingly, these problems are thus also very amenable to analysis by AI; we ask readers interested in exploring these questions to keep the above in mind.

\section{Proof of Theorem \ref{thm:main}} \label{sec:two_var_general_r}

We define, for each positive integer $k$, the solution set
\begin{equation}
S_k \ := \ \left\{ (a,\,b)\in\mathbb{N}\times\mathbb{N} \; : \; \frac{a+1}{b} \ + \ \frac{b+1}{a} \ = \ k \right\}.
\end{equation}
Our goal is to determine all integers $k$ for which $S_k \neq \varnothing$ and to describe all positive integer solutions $(a, \,b)$ of~\eqref{eq:main_equation}.
We show that non-empty solution sets $S_{k}$ occur only for $k \ = \ 3$ and $k \ = \ 4$.

We begin by showing that $S_{k} \ = \ \varnothing$ for all $k \, \leqslant \, 2$.
Suppose that $(a, \, b) \in S_k$. Let $x \ = \ a/b \,>\, 0$. Then
\begin{equation}
k
\ = \
\frac{a+1}{b} \, + \, \frac{b+1}{a}
\ = \
\left( \frac{a}{b} \,+\, \frac{b}{a} \right)
\,+\,
\frac{1}{a} \,+\, \frac{1}{b}
\ = \
x \,+\, \frac{1}{x}
\,+\,
\frac{1}{a} \,+\, \frac{1}{b}.
\end{equation}
By the arithmetic--geometric mean inequality, $\tfrac{x \,+\, 1/x}{2} \geqslant \sqrt{x \cdot \tfrac{1}{x}} \ = \ 1$,
and hence $x\,+\, 1/x \geqslant 2$.
Therefore, $k \, > \, 2$ since $a, \, b \, > \, 0$.

First, we determine whether $S_{k}$ contains a solution of the form $(u, \, u)$. Such solutions are called \emph{diagonal solutions}; all other solutions are called \emph{non-diagonal solutions}.

\begin{lemma} \label{Lemma: diagonalIMPLIESk34}
The only diagonal solutions to~\eqref{eq:main_equation} are $(2,\,2)$ (corresponding to $k\ = \ 3$)
and $(1,\,1)$ (corresponding to $k\ = \ 4$).
\end{lemma}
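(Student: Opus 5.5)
Substituting $a = b = u$ into \eqref{eq:main_equation} gives $\frac{u+1}{u} + \frac{u+1}{u} = k$, that is,
\begin{equation*}
\frac{2(u+1)}{u} \ = \ 2 + \frac{2}{u} \ = \ k .
\end{equation*}
Since $k$ is an integer, $2/u$ must be an integer, so $u$ divides $2$. Hence $u \in \{1, 2\}$. Both values work: $u = 1$ gives $k = 4$, and $u = 2$ gives $k = 3$.

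Equivalently, one can start from the cleared-denominator form $a^2 + a + b^2 + b = kab$. Setting $a = b = u$ gives $2u^2 + 2u = ku^2$, so $u^2 \mid 2u$, i.e.\ $u \mid 2$, which leads to the same two cases. I would also note that these values are consistent with the bound $k > 2$ established above.

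There is no real obstacle here. The only point needing care is the divisibility step: we must use that $k$ is required to be an integer and that $u$ is a positive integer, so that $2/u \in \mathbb{Z}$ forces $u \mid 2$.
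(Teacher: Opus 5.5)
Your proposal is correct and takes essentially the same approach as the paper: both substitute $a=b=u$, reduce to $2+2/u=k$, and use integrality of $k$ to get $u \mid 2$, which yields $(1,1)$ with $k=4$ and $(2,2)$ with $k=3$. Your cleared-denominator variant is just a restatement of the same divisibility step.
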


\begin{proof}
Let $(u, \, u) \in S_{k}$ be a diagonal solution.
Then $(u+1)/u \,+\, (u+1)/u \ = \ k$,
so $(2u+2)/u \ = \ k$,
thus $2\,+\,2/u \ = \ k$.
Hence $u$ divides $2$, and therefore $u \ = \ 1$ gives $k \ = \ 4$,
while $u \ = \ 2$ gives $k \ = \ 3$.
We conclude that the only diagonal solutions are $(1, \, 1)$ for $k \ = \ 4$ and $(2, \, 2)$ for $k \ = \ 3$.
\end{proof}

The following lemma shows that these solutions have a minimal coordinate sum for their corresponding values of $k$.

\begin{lemma} \label{lemma: minimalcoordinatesum}
If $k$ is $3$ or $4$, then the solutions with minimal coordinate sum are $(2,\,2)$ and $(1,\,1)$, corresponding to $k \ = \ 3$ and $k \ = \ 4$, respectively.
\end{lemma}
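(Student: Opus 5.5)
We need the minimal coordinate sum for $k=3$ to be $4$, attained only at $(2,2)$, and for $k=4$ to be $2$, attained only at $(1,1)$. We work with the cleared form $a^{2}+a+b^{2}+b=kab$ and use a direct inequality for each $k$.

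\emph{The case $k=4$.} Here $(1,1)$ has sum $2$. Every positive pair other than $(1,1)$ has sum at least $3$. So it is enough to know that $(1,1)\in S_4$, which Lemma~\ref{Lemma: diagonalIMPLIESk34} gives.

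\emph{The case $k=3$.} First we rule out small sums. Take $(a,b)\in S_3$ with $a\leqslant b$. The equation gives $a\mid b^{2}+b=b(b+1)$ and $b\mid a(a+1)$.

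\begin{itemize}
\item If $a=1$, then $b\mid 2$, so $b\in\{1,2\}$.
  \begin{itemize}
  \item $(1,1)$ gives $4\neq 3$.
  \item $(1,2)$ gives $1+2+4+2=9\neq 3\cdot 2=6$.
  \end{itemize}
  Hence $a\geqslant 2$.
\item Since $b\geqslant a\geqslant 2$, we get $a+b\geqslant 4$.
\item Equality $a+b=4$ forces $a=b=2$, and $(2,2)\in S_3$ by Lemma~\ref{Lemma: diagonalIMPLIESk34}.
\end{itemize}

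The case analysis is trivial. The only point needing care is that "minimal" means minimal over all of $S_k$, so non-diagonal solutions with small coordinates must be excluded explicitly. That is why we use the divisibility $b\mid a(a+1)$ when $a=1$, rather than relying on the diagonal lemma alone.

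Alternatively, a uniform argument: from $k=x+1/x+1/a+1/b$ with $x+1/x\geqslant 2$, we get $1/a+1/b\leqslant k-2$. For $k=3$ this gives $1/a+1/b\leqslant 1$, which excludes $a=1$ unless $b$ is infinite, so $a,b\geqslant 2$ and the sum is at least $4$. This inequality is the cleanest route, and I would present it as the main step.
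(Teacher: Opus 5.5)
Your proof is correct. For $k=4$ it is the same as the paper's: the only positive pair with coordinate sum $2$ is $(1,1)$, which lies in $S_4$.

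For $k=3$ your route differs somewhat from the paper's.
\begin{itemize}
\item The paper does a bare finite check. The only positive pairs with coordinate sum at most $3$ are $(1,1)$, $(1,2)$ and $(2,1)$. None of them lies in $S_3$, so the minimum sum is $4$, and $(2,2)$ attains it.
\item Your divisibility argument ends up checking the same two pairs, up to the flip. Your inequality $1/a+1/b\leqslant k-2=1$ does the job more cleanly, and it reuses the AM--GM step the paper already uses to show $k>2$.
\item Both of your versions prove something stronger: no element of $S_3$ has a coordinate equal to $1$.
\end{itemize}

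This stronger fact gives you something the paper's proof leaves implicit, namely uniqueness of the minimizer. The pairs $(1,3)$ and $(3,1)$ also have sum $4$, and the paper never explicitly excludes them. In your argument, $a,b\geqslant 2$ together with $a+b=4$ forces $(2,2)$ directly. So the paper's route is shorter, while yours fully justifies the lemma's wording ``the solutions with minimal coordinate sum are \ldots''.

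Two cosmetic fixes:
\begin{itemize}
\item For $(1,2)$ the left side is $1+1+4+2=8$, not $9$. It is still $\neq 6$, so the conclusion is unaffected.
\item Instead of ``unless $b$ is infinite'', say that $a=1$ would give $1/b\leqslant 0$, which is impossible.
\end{itemize}
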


\begin{proof}
For $k \ = \ 3$, the points $(1,\,1)$, $(1,\,2)$, and $(2,\,1)$ are not solutions. Hence every solution in $S_3$ has coordinate sum at least $4$, and equality is attained by $(2,\,2)$.
\\
For $k \ = \ 4$, the diagonal solution $(1,\,1)$ has coordinate sum $2$, which is minimal among all positive integer solutions in $S_{4}$.
\end{proof}

The next theorem shows that every non-diagonal solution can be reduced to another solution with a strictly smaller coordinate sum using Vieta’s formulas. Iterating this process eventually leads to a diagonal solution.

\begin{theorem} \label{Lemma: smallerCOORDINATEsum}
Let $(a,\,b) \in S_k$ be a non-diagonal solution and without loss of generality we may assume $a \, > \, b$.
Then there exists a positive integer $a'$ such that
\begin{equation*}
(a',\,b)\in S_k,
\qquad
a'\,<\,a,
\qquad\text{and}\qquad
a'\,+\,b\,<\,a\,+\,b.
\end{equation*}
Consequently, any non-diagonal solution has a chain of solutions ending in a diagonal solution, which has a minimal coordinate sum.
\end{theorem}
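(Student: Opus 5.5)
We would prove this by the standard Vieta-jumping descent. Clearing denominators in \eqref{eq:main_equation} gives
\begin{equation*}
a^{2} \,-\, (kb-1)\,a \,+\, (b^{2}+b) \ = \ 0 .
\end{equation*}
We fix $b$ and regard this as a quadratic in the first coordinate, $x^{2}-(kb-1)x+(b^{2}+b)=0$, one of whose roots is $x=a$. We then define the other root $a'$ by Vieta's formulas, which give it in two ways:
\begin{equation*}
a' \ = \ kb-1-a \ = \ \frac{b^{2}+b}{a}.
\end{equation*}

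The first expression shows that $a'$ is an integer. The second shows that $a'>0$, since $b^{2}+b>0$ and $a>0$. Because $a'$ is a root of the same quadratic, $(a',b)$ satisfies $a'^{2}+a'+b^{2}+b=ka'b$. Dividing by $a'b$ shows that $(a',b)\in S_k$; this is exactly the Vieta jump $(kb-1-a,\,b)$ listed in the introduction.

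The key step is the strict inequality $a'<a$. We use the product form. Since $a>b$ and both are integers, $a\geqslant b+1$, so $a^{2}\geqslant a(b+1)>b(b+1)=b^{2}+b$. Hence
\begin{equation*}
a' \ = \ \frac{b^{2}+b}{a} \ < \ a ,
\end{equation*}
and therefore $a'+b<a+b$.

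This is the step where non-diagonality is essential. If $a=b$, then $a'=(b^2+b)/b=b+1$, which exceeds $a$, so the descent fails at the diagonal. This is why the chain is forced to stop exactly at a diagonal solution. The hard case is sharp: $a=b+1$ gives $a'=b$, and the strict inequality survives only because we used $a\geqslant b+1$ rather than just $a>b$.

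For the \emph{consequently} part, we iterate. Given a non-diagonal solution, we flip if necessary so that the larger coordinate comes first. We then replace it by the smaller Vieta root, which by the above yields a solution in $S_k$ with strictly smaller coordinate sum. The coordinate sums form a strictly decreasing sequence of positive integers, so the process must terminate. It can only terminate at a solution to which the step does not apply, that is, at a diagonal solution.

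By Lemma~\ref{Lemma: diagonalIMPLIESk34}, this diagonal solution is $(2,2)$ with $k=3$ or $(1,1)$ with $k=4$. By Lemma~\ref{lemma: minimalcoordinatesum}, these are precisely the solutions of minimal coordinate sum. As a by-product, every nonempty $S_k$ has $k\in\{3,4\}$.
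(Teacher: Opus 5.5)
Your proposal is correct and follows the paper's proof essentially step by step: Vieta's formulas give integrality and positivity of $a'$, the coordinate sum strictly decreases, well-ordering forces termination at a diagonal solution, and the two lemmas on diagonal solutions and minimal coordinate sum finish the argument. The only cosmetic difference is that you prove $a'<a$ directly via $a^2\geq a(b+1)>b^2+b$. The paper instead derives the sharper bound $a'=(b^2+b)/a\leq (b^2+b)/(b+1)=b<a$, which also shows that the flipped pair $(b,a')$ again has its larger coordinate first.
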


\begin{proof}
A solution pair $(a, \, b) \in S_{k}$ satisfies
\begin{equation} \label{1k}
\frac{a + 1}{b} \, + \, \frac{b + 1}{a} \ = \ k,
\end{equation}
or equivalently
\begin{equation}\label{eq:quad_in_a}
a^{2} \,+\, a \,+\, b^{2} \,+\, b \ =\ k a b.
\end{equation}
Viewing~\eqref{eq:quad_in_a} as a quadratic equation in $a$, we obtain
\begin{equation}
a^2 \, - \, (kb-1)a \, + \, (b^2+b) \ = \ 0.
\end{equation}
Hence the polynomial
\begin{equation}\label{eq:quad}
x^2 \, - \, (kb-1)x \, + \, (b^2+b)
\end{equation}
has roots $x_1 \ = \ a$ and $x_2 \ = \ a'$, where $a'$ denotes the other root.
By Vieta’s formulas the roots satisfy
\begin{align} \label{eq:first}
a \, + \, a' \ = \ kb \, - \, 1
\end{align}
and
\begin{align} \label{eq:second}
a a'  \ = \ b^2 \, + \, b.
\end{align}
In particular, $a' \ = \ k b \, - \, 1 \, - \, a$ is an integer from \eqref{eq:first} and is positive from \eqref{eq:second}.
Since $a\,>\,b$, we have $a \, \geqslant \, b \, + \, 1$.
Therefore
\begin{equation} \label{a1LESSTHANa}
a'
\ = \
\frac{b^{2} + b}{a}
\, \leqslant \,
\frac{b^{2}+b}{b+1}
\ = \
b
\, < \,
a.
\end{equation}
Consequently, Vieta's formula produces a new integral solution $(a', \, b) \in S_{k}$ satisfying $a' \,+\, b \,<\, a \,+\, b$.

Hence $(a', \, b)$ is a solution with strictly smaller coordinate sum than $(a, \, b)$. Then the flipped pair $(b, \, a')$ is also a solution, satisfying $b \geqslant a'$ and having strictly smaller coordinate sum than $(a, \, b)$.

If $a' \ = \ b$, then $(b, \, a')$ is diagonal, hence equal to either $(1,\,1)$ or $(2,\,2)$, and therefore $k \ = \ 4$ or $3$, respectively.

Further, by Vieta jumps and flips, any non-diagonal solution $(a, \, b)$ jumps to a diagonal solution in finitely many steps.
If not, we would have infinitely many solutions with positive entries and strictly decreasing coordinate sums. This is impossible, since there are only finitely many pairs of positive integers whose coordinate sum is between $2$ and $a\,+\,b$.
The only diagonal solutions are $(2, \, 2)$ for $k \ = \ 3$ and $(1, \, 1)$ for $k \ = \ 4$, both of which have minimal coordinate sum for their corresponding value of $k$ by Lemma~\ref{lemma: minimalcoordinatesum}.
Therefore, every solution arises from a sequence generated by a diagonal solution, and the only such points are $(1, \, 1)$ and $(2, \, 2)$. Consequently, $k$ can only be $3$ and $4$.
\end{proof}

Theorem~\ref{Lemma: smallerCOORDINATEsum} shows the following.
\begin{enumerate}
\item $S_k$ is non-empty only for $k \ = \ 3$ and $k \ = \ 4$.
\item Every non-diagonal solution generates a chain of solutions ending in a diagonal solution.
\end{enumerate}

We now describe the reverse process of Vieta jumps and flips.

\begin{remark}
Theorem~$\ref{Lemma: smallerCOORDINATEsum}$ describes the reduction step in the process of Vieta jumps and flips. Starting from a non-diagonal solution $(a, \, b)$ with $a\,>\,b$, we apply Vieta’s formulas to the larger coordinate $a$ while keeping the smaller coordinate $b$ fixed. This produces a new solution $(a', \, b)$ in which the larger coordinate $a$ is replaced by a smaller one $a'$, thereby strictly decreasing the coordinate sum.

Reversing this process allows us to generate all solutions in $S_k$ starting from a diagonal solution. More precisely, let $(a,b)\in S_k$ with $a\geqslant b$. Viewing~\eqref{eq:quad_in_a} as a quadratic equation in the variable $b$, we obtain
\begin{equation}
b^2 \,-\, (ka-1)b \,+\, (a^2+a) \ = \ 0.
\end{equation}
The two roots of the quadratic polynomial $x^{2} \,-\, (ka-1) x \,+\, (a^{2}+a)$ are $b$ and $b'$, where $b'$ denotes the other root.
Then Vieta’s formulas give $b\,+\,b' \ = \ ka\,-\,1$ and $b b' \ = \ a^2 \, + \, a$.
Hence $b'$ is a positive integer and
\begin{equation}
b'
\ = \
\frac{a^2+a}{b}
\geqslant
\frac{b^2+b}{b}
\ = \
b\,+\,1,
\end{equation}
since $a\geqslant b$. Therefore, $(a, \, b')\in S_k$ with $b'\,>\,b$ and $a\,+\,b'\,>\,a\,+\,b$.

Thus, while the reduction process decreases the coordinate sum by replacing the larger coordinate with a smaller one, reversing the jump increases the coordinate sum and generates all non-diagonal solutions from a diagonal solution.
\end{remark}

\begin{example}
For $k \ = \ 3$, consider the solution $(a,\,b) \ = \ (6,\,3)$. Applying Vieta jumping to the larger coordinate $a \ = \ 6$ gives $a' \ = \ 3\cdot 3 \,-\, 1 \,-\, 6 \ = \ 2$, so we obtain the solution $(a', \, b) \ = \ (2,\,3)$, which has smaller coordinate sum.
Flipping and then applying Vieta jumping once more to the larger coordinate at $(\tilde{a}, \, \tilde{b}) \ = \ (3, \, 2)$ yields $\tilde{a}' \ = \ 3\cdot 2 \,-\, 1 \,-\, 3 \ = \ 2$, so we obtain the diagonal solution $(2, \, 2)$, which has the minimal coordinate sum in this chain.

On the other hand, applying Vieta jumping to the smaller coordinate $b \ = \ 3$ gives $b' \ = \ 3\cdot 6 \,-\, 1 \,-\,3 \ = \ 14$, yielding the solution $(a, \, b') \ = \ (6,\,14)$,
which has larger coordinate sum.
\end{example}

Therefore, every non-diagonal solution is obtained from a diagonal solution through a finite sequence of Vieta jumps and flips.

\medskip

We now prove parts~\ref{propert(i)} and~\ref{propert(ii)} of Theorem~\ref{thm:main} by analyzing separately the cases $k \ = \ 3$ and $k \ = \ 4$, corresponding to Theorems~\ref{thmS3} and~\ref{thmS4}, respectively.
The method in both cases is the same. Starting from a diagonal solution, we apply Vieta’s formulas to the quadratic polynomial~\eqref{eq:quad} to obtain the other solution.
We describe the solution set $S_k$ in each case.

We first describe $S_3$.
We study positive integer solutions $a,\,b$ of the Diophantine equation
\begin{equation}
\frac{a+1}{b} \;+\; \frac{b+1}{a} \ = \ 3.
\end{equation}
Starting from the solution with minimal coordinate sum $(a_0, \, a_1) \ := \ (2, \, 2)$, we recursively construct a sequence $\{a_n\}_{n\geqslant 0}$ of positive integers using Vieta’s formulas. For each $n \, \geqslant \, 2$, we define $a_n$ to be the other root of
\begin{equation}
x^2 \,-\, (3 a_{n-1} - 1)x \,+\, (a_{n-1}^2 + a_{n-1}) \ = \ 0.
\end{equation}
Vieta's formulas give
\begin{equation} \label{eq:closed3}
a_{n} \;+\; a_{n-2} \ = \ 3a_{n-1} - 1 \, , \qquad
a_{n} \, a_{n-2} \ = \ a_{n-1}^{2} + a_{n-1},
\quad (n \geqslant 2).
\end{equation}
Let $a_{n} \ = \ c_{n} \,+\, \delta$.
Substituting into the recurrence gives
\begin{equation}
(c_{n}+\delta) \,+\, (c_{n-2}+\delta) \ = \ 3 (c_{n}+\delta) \,-\, 1,
\end{equation}
so $c_{n} \,+\, c_{n-2} \ = \ 3 c_{n-1} \,+\, (3 \delta - 2 \delta - 1)$.
\\
Choosing $\delta \ = \ 1$ yields the homogeneous recurrence $c_{n} \,+\, c_{n-2} \ = \ 3 c_{n-1}$.
\\
The associated characteristic polynomial is $x^{2} \, - \, 3 x \,+\, 1$ whose roots are
\begin{equation}
\alpha \ = \ \frac{3 + \sqrt{5}}{2},
\qquad
\beta \ = \ \frac{3 - \sqrt{5}}{2}.
\end{equation}
Hence, by Binet's formula (see~\cite[Binet-like Formulas]{Koshy2019}), the general solution is $c_{n} \ = \ A \alpha^{n} \,+\, B \beta^{n}$.
\\
Using the initial conditions $c_{0} \ = \ c_{1} \ = \ 1$, we obtain
$\displaystyle{
\begin{cases}
A\,+\,B \ = \ c_{0}
\\
A\alpha \,+\, B\beta \ = \ c_{1}
\end{cases}
}$.
Solving this system gives
\begin{equation}
A \ = \ \frac{c_{1} - c_{0} \beta}{\alpha - \beta} \ = \ \frac{1-\beta}{\alpha-\beta} \ = \ \frac{-1+\sqrt{5}}{2\sqrt{5}} ,
\qquad
B \ = \ c_{0} \,-\, A \ = \ 1 \,-\, A \ = \ \frac{1+\sqrt{5}}{2\sqrt{5}}.
\end{equation}
Therefore, for all $n \geqslant 0$,
\begin{equation}
a_{n}
\ = \
\frac{-1 + \sqrt{5}}{2 \sqrt{5}} \left( \frac{3 + \sqrt{5
}}{2} \right)^{n} \;+\; \frac{1 + \sqrt{5}}{2 \sqrt{5}} \left( \frac{3 - \sqrt{5}}{2} \right)^{n} \;+\; 1.
\end{equation}
The sequence $\{a_n\}_{n \geqslant 0}$ corresponds to A032908 in the OEIS~\cite{OEIS}, and the first few terms are
\begin{align}
\begin{array}{c|c|c|c|c|c|c|c|c|c|c}
a_{0} & a_1 & a_{2} & a_{3} & a_{4} & a_{5} & a_{6} & a_{7} & a_{8} & a_{9} & a_{10} \\
\hline
2 & 2 & 3 & 6 & 14 & 35 & 90 & 234 & 611 & 1598 & 4182
\end{array}
\end{align}
Hence we have shown that for $k \ = \ 3$, every solution $(a, \, b) \in S_{3}$ with $a \, \leqslant \, b$ is of the form $a \ = \ a_n$ and $b \ = \ a_{n+1}$ for some $n \, \geqslant \, 0$.

\begin{lemma} \label{claim2}
Let $(a, \, b) \in S_{3}$ with $a \,\leqslant\, b$.
Then there is an $n \geqslant 0$ such that
\begin{equation}
a \ = \ F_{2n-1} \,+\, 1, \qquad b \ = \ F_{2n+1} \,+\, 1,
\end{equation}
where $\{F_n\}_{n \geqslant -1}$ is the Fibonacci sequence.
\end{lemma}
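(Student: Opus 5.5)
The plan is to combine the classification already established before the statement with a direct identification of the sequence $\{a_n\}$ with odd-indexed Fibonacci numbers. By Theorem~\ref{Lemma: smallerCOORDINATEsum} and the subsequent discussion, every $(a,\,b)\in S_3$ with $a\leqslant b$ equals $(a_n,\,a_{n+1})$ for some $n\geqslant 0$. Here $a_0=a_1=2$ and $a_n=3a_{n-1}-1-a_{n-2}$ for $n\geqslant 2$. This uses the fact that the descent of Theorem~\ref{Lemma: smallerCOORDINATEsum} is deterministic: from $(a,b)$ with $a>b$ the only smaller jump is $a'=kb-1-a$. So reversing it from $(2,2)$ produces a single chain, the one described in the Remark. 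It therefore suffices to prove $a_n=F_{2n-1}+1$ for all $n\geqslant 0$, since then $a_{n+1}=F_{2n+1}+1$.

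Set $c_n=a_n-1$, as in the computation preceding the lemma. Then $c_0=c_1=1$ and $c_n=3c_{n-1}-c_{n-2}$ for $n\geqslant 2$. I will show by strong induction that $c_n=F_{2n-1}$. The base cases are $F_{-1}=1=c_0$ and $F_1=1=c_1$. For the inductive step, I need the identity
\begin{equation*}
F_{m+2}\ =\ 3F_m - F_{m-2},
\end{equation*}
valid for all $m\geqslant 1$ with the convention $F_{-1}=1$. It follows by expanding $F_{m+2}=F_{m+1}+F_m=2F_m+F_{m-1}$ and $F_{m-1}=F_m-F_{m-2}$. The case $m=1$ reads $F_3=3F_1-F_{-1}$, i.e.\ $2=3-1$, which checks the convention. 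Taking $m=2n-3$ with $n\geqslant 2$ gives $F_{2n-1}=3F_{2n-3}-F_{2n-5}$, which is exactly the recurrence for $c_n$, so the induction closes.

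As an alternative cross-check, one could compare with the Binet-type closed form already derived for $a_n$. Using $\alpha=\varphi^2$ and $\beta=\psi^2$, where $\varphi,\psi=(1\pm\sqrt5)/2$, we have $\varphi^{2n-1}=\varphi^{-1}\alpha^n$ with $\varphi^{-1}=(\sqrt5-1)/2$. Hence $\varphi^{2n-1}/\sqrt5$ matches the coefficient $A=(\sqrt5-1)/(2\sqrt5)$, and similarly for $B$. The inductive route is cleaner, though, and avoids the radicals.

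The only delicate points are the small indices, namely the use of $F_{-1}=1$ and the case $n=0$, which gives $(a,b)=(2,2)$. The other is justifying that the chain from $(2,2)$ exhausts $S_3\cap\{a\leqslant b\}$. I expect the latter to be the main obstacle. It rests on uniqueness of the downward jump plus the fact that the upward jump from $(a_{n-1},a_n)$, applied to the smaller coordinate, returns to $(a_{n-2},a_{n-1})$. The new pair is $(a_n,a_{n+1})$, where $a_{n+1}=3a_n-1-a_{n-1}$ is the other root of the quadratic in $b$ with $a=a_n$. So every solution lies on this single chain, and the lemma follows.
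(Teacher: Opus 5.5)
Your proof is correct, but it establishes $a_n=F_{2n-1}+1$ by a different route from the paper.

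The paper computes the Binet coefficients $A,B$ of $c_n=a_n-1$. It then expands $F_{2n-1}+1$ via Binet's formula as $\frac{\varphi^{-1}}{\sqrt5}(\varphi^2)^n+\frac{\varphi}{\sqrt5}(\varphi^{-2})^n+1$ and matches the two closed forms term by term. That is precisely what you relegate to a cross-check. Your main argument is a strong induction on $c_n=3c_{n-1}-c_{n-2}$ via $F_{m+2}=3F_m-F_{m-2}$, correctly checked at $m=1$ with $F_{-1}=1$. It is more elementary: no radicals, no linear solve for $A,B$, and the convention $F_{-1}=1$ enters visibly. The paper's version reuses a closed form it wanted anyway and parallels the $k=4$ case, where no Fibonacci identity is at hand.

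You also justify the reduction to pairs $(a_n,a_{n+1})$, which the paper only asserts. There is one slip: jumping the \emph{smaller} coordinate of $(a_{n-1},a_n)$ gives $3a_n-1-a_{n-1}=a_{n+1}$. It is the jump on the larger coordinate that returns to $(a_{n-2},a_{n-1})$. Moreover, what prevents branching is that the descent step is invertible, not merely deterministic.

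The cleanest form is strong induction on $a+b$. If $a<b$, then $(3a-1-b,\,a)\in S_3$ has first coordinate at most $a$ and smaller sum. So it equals some $(a_m,a_{m+1})$, forcing $a=a_{m+1}$ and $b=3a_{m+1}-1-a_m=a_{m+2}$.
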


\begin{proof}
First of all, there exists $n \geqslant 0$ such that $a \ = \ a_{n}$ and $b \ = \ a_{n+1}$.
Using Binet's formula (see~\cite[Binet-like Formulas]{Koshy2019})
\begin{equation}
F_n \ = \ \frac{\varphi^n - (-\varphi)^{-n}}{\sqrt5},
\qquad
\varphi \ = \ \frac{1+\sqrt5}{2},
\end{equation}
a direct computation shows that
\begin{align*}
& F_{2n-1}\,+\,1
\ = \
\frac{\varphi^{-1}}{\sqrt{5}} (\varphi^{2})^{n} \,+\, \frac{\varphi}{\sqrt{5}} (\varphi^{-2})^{n} \,+\, 1
\\
\\
& \ = \ \frac{1}{\sqrt{5}} \cdot \frac{-1+\sqrt5}{2}\left(\frac{3+\sqrt5}{2}\right)^n
\,+\,
\frac{1}{\sqrt{5}} \cdot \frac{1+\sqrt5}{2}\left(\frac{3-\sqrt5}{2}\right)^n
\,+\,1
\\
\\
& \ = \ \frac{-1+\sqrt5}{2\sqrt5}\left(\frac{3+\sqrt5}{2}\right)^n
\,+\,
\frac{1+\sqrt5}{2\sqrt5}\left(\frac{3-\sqrt5}{2}\right)^n
\,+\,1.
\end{align*}
Comparing this expression with the closed form for $a_n$ proves that
\begin{equation} \label{an:definition}
a_n \ = \ F_{2n-1}\,+\,1,
\end{equation}
thus $a \ = \ F_{2n-1}\,+\,1$ and $b \ = \ F_{2n+1}\,+\,1$.
\end{proof}

The following theorem establishes part~\ref{propert(i)} of Theorem~\ref{thm:main}. We now summarize the preceding discussion by giving a complete description of all solutions in $S_3$.

\begin{theorem}[Part~\ref{propert(i)} of Theorem~\ref{thm:main}] \label{thmS3}
The set of all solutions in $S_{3}$ is given by
\begin{equation}
S_{3}
\ = \
\big \{(F_{2n-1}+1, \, F_{2n+1}+1) : n\geqslant 0 \big \}
\cup
\big \{ (F_{2n+1}+1, \, F_{2n-1}+1) : n\geqslant 1 \big \}.
\end{equation}
In particular,
\begin{equation}
(2, \, 2), \,
(3, \, 2), \,
(6, \, 3), \,
(14, \, 6), \,
(35, \, 14), \,
(90, \, 35), \,
(234, \, 90), \,
(611, \, 234), \,
\ldots \in S_{3}.
\end{equation}
\end{theorem}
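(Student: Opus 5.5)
We prove Theorem~\ref{thmS3} by establishing both inclusions, using Lemma~\ref{claim2} for one direction and a direct verification for the other.

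\emph{Solutions lie in the set.} Let $(a,\,b)\in S_3$. If $a \leqslant b$, Lemma~\ref{claim2} gives $(a,\,b) = (F_{2n-1}+1,\,F_{2n+1}+1)$ for some $n \geqslant 0$, which lies in the first set. If $a > b$, apply Lemma~\ref{claim2} to the flipped pair $(b,\,a)\in S_3$, so $(b,\,a) = (F_{2n-1}+1,\,F_{2n+1}+1)$. Then $n \neq 0$: for $n=0$ we have $F_{-1}=F_1=1$, so $b=a=2$, contradicting $a>b$. Hence $n \geqslant 1$ and $(a,\,b)$ lies in the second set. This is why the second family starts at $n=1$: it avoids counting the diagonal solution $(2,\,2)$ twice.

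\emph{The set consists of solutions.} Since $S_3$ is closed under flips, it suffices to show that $(a_n,\,a_{n+1}) \in S_3$ for all $n \geqslant 0$, using $a_n = F_{2n-1}+1$ from \eqref{an:definition}. I will argue by induction on $n$.
\begin{itemize}
\item \emph{Base case:} $(a_0,\,a_1) = (2,\,2) \in S_3$ by Lemma~\ref{Lemma: diagonalIMPLIESk34}.
\item \emph{Inductive step:} suppose $(a_{n-1},\,a_n) \in S_3$. Then $a_{n-1}$ is a root of $x^2-(3a_n-1)x+(a_n^2+a_n)$. The other root is $3a_n-1-a_{n-1}$, and by the recurrence \eqref{eq:closed3} this equals $a_{n+1}$. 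Hence $(a_{n+1},\,a_n)$, i.e.\ the Vieta jump, is again a solution, and by symmetry $(a_n,\,a_{n+1}) \in S_3$.
\end{itemize}

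The only subtle point is to use only the linear recurrence $a_{n+1}+a_{n-1} = 3a_n-1$, which holds for the closed form; the multiplicative Vieta relation then follows automatically from the quadratic. As an independent check, one can verify directly that
\[
F_{2n-1}^2 + F_{2n+1}^2 - 3F_{2n-1}F_{2n+1} = -1
\]
(a Cassini-type identity, provable by induction). Substituting $a = F_{2n-1}+1$, $b = F_{2n+1}+1$ into $a^2+a+b^2+b-3ab$ and using $F_{2n+1}+F_{2n-1} = 3F_{2n}-\ldots$ shows the expression vanishes, but the induction above suffices.

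Finally, the listed examples follow from the computed values $a_0, a_1, \ldots = 2,\,2,\,3,\,6,\,14,\,35,\,90,\,234,\,611$, taking consecutive pairs in flipped order.
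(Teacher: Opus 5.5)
Your proof is correct and takes essentially the paper's route. One inclusion is Lemma~\ref{claim2} plus a flip, and your remark that $n=0$ would force $a=b=2$ correctly explains why the second family starts at $n=1$. The other inclusion is the Vieta-jump construction of $\{a_n\}$ from $(2,2)$, which you make explicit as an induction where the paper leaves it implicit.

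One cleanup: your trailing ``independent check'' is garbled as written, but it needs no extra Fibonacci identity. With $x=a-1$ and $y=b-1$ one has $a^2+a+b^2+b-3ab = x^2+y^2-3xy+1$ identically, so the Cassini-type identity alone finishes it.
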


\medskip

In the same way as for $S_3$, we now describe $S_4$.
We study positive integer solutions $a, \, b$ of the Diophantine equation
\begin{equation}
\frac{a+1}{b} \;+\; \frac{b+1}{a} \ = \ 4.
\end{equation}
Starting from the diagonal solution with minimal coordinate sum $(b_0, \, b_{1}) \ := \ (1, \, 1)$, we recursively construct a sequence $\{b_n\}_{n\geqslant 0}$ of positive integers using Vieta’s formulas.
For each $n \geqslant 2$, we define $b_n$ to be the other root of
\begin{equation}
x^2 \,-\, (4b_{n-1} - 1)x \,+\, (b_{n-1}^2 + b_{n-1}) \ = \ 0.
\end{equation}
By Vieta's formulas,
\begin{equation}
b_{n} \, + \, b_{n-2} \ = \ 4 b_{n-1} \,-\, 1,
\qquad
b_{n} b_{n-2} \ = \ b_{n-1}^{2} \, + \, b_{n-1}
\qquad (n \geqslant 2).
\end{equation}
Let $b_{n} \ = \ c_{n} \,+\, \delta$.
Substituting into the recurrence gives
\begin{equation}
(c_{n}+\delta) \,+\, (c_{n-2}+\delta) \ = \ 4 (c_{n}+\delta) \,-\, 1,
\end{equation}
so $c_{n} \,+\, c_{n-2} \ = \ 4 c_{n-1} \,+\, (4 \delta - 2 \delta - 1)$.
\\
Choosing $\delta \ = \ 1/2$ yields the homogeneous recurrence $c_{n} \,+\, c_{n-2} \ = \ 4 c_{n-1}$.
\\
The associated characteristic polynomial is $x^{2} \, - \, 4 x \,+\, 1$ whose roots are
\begin{equation}
\alpha \ = \ \frac{4 + \sqrt{12}}{2} \ = \ 2 \,+\, \sqrt{3},
\qquad
\beta \ = \ \frac{4 - \sqrt{12}}{2} \ = \ 2 \,-\, \sqrt{3}.
\end{equation}
Hence, by Binet's formula (see~\cite[Binet-like Formulas]{Koshy2019}), the general solution is $c_{n} \ = \ A \alpha^{n} \,+\, B \beta^{n}$.
\\
Using the initial conditions $c_{0} \ = \ c_{1} \ = \ 1/2$, we obtain
$\displaystyle{
\begin{cases}
A\,+\,B \ = \ c_{0}
\\
A\alpha \,+\, B\beta \ = \ c_{1}
\end{cases}
}$.
Solving this system gives
\begin{equation}
A \ = \ \frac{c_{1} - c_{0} \beta}{\alpha - \beta} \ = \ \frac{1}{2} \cdot \frac{1-\beta}{\alpha-\beta} \ = \ \frac{1}{2} \cdot \frac{-1+\sqrt{3}}{2\sqrt{3}} \ = \ \frac{3-\sqrt{3}}{12}
\end{equation}
and
\begin{equation}
B \ = \ c_{0} \,-\, A \ = \ \frac{1}{2} \,-\, A \ = \ \frac{3+\sqrt{3}}{12}.
\end{equation}
Therefore, for all $n \geqslant 0$,
\begin{equation}\label{eq:bn-closed}
b_n
\ = \
\frac{(3-\sqrt{3})}{12} (2 \,+\, \sqrt{3})^{n} \;+\; \frac{(3+\sqrt{3})}{12} (2 \,-\, \sqrt{3})^{n} \;+\; \frac{1}{2}.
\end{equation}
The sequence $\{b_n\}_{n \geqslant 0}$ corresponds to A101879 in the OEIS~\cite{OEIS}, and the first few terms are
\begin{equation}
\begin{array}{c|c|c|c|c|c|c|c|c|c|c}
b_{0} & b_{1} & b_{2} & b_{3} & b_{4} & b_{5} & b_{6} & b_{7} & b_{8} & b_{9} & b_{10} \\
\hline
1 & 1 & 2 & 6 & 21 & 77 & 286 & 1066 & 3977 & 14841 & 55386
\end{array}
\end{equation}
Hence we have shown that for $k \ = \ 4$, every solution $(a, \, b) \in S_{4}$ with $a \, \leqslant \, b$ is of the form $a \ = \ b_n$ and $b \ = \ b_{n+1}$ for some $n \, \geqslant \, 0$.

The following theorem establishes part~\ref{propert(ii)} of Theorem~\ref{thm:main}. We now summarize the preceding discussion by giving a complete description of all solutions in $S_4$.

\begin{theorem}[Part~\ref{propert(ii)} of Theorem~\ref{thm:main}] \label{thmS4}
The set of all solutions in $S_{4}$ is given by
\begin{equation}
S_{4}
\ = \
\big \{(b_{n}, \, b_{n+1}) : n\geqslant 0 \big \}
\cup
\big \{ (b_{n+1}, \, b_{n}) : n\geqslant 1 \big \}.
\end{equation}
In particular,
\begin{equation}
(1, \, 1), \, (2, \, 1), \, (6, \, 2), \, (21, \, 6), \, (77, \, 21), \, (286, \, 77), \, \ldots \in S_{4}.
\end{equation}
\end{theorem}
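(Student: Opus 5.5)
We need to show that $S_4$ is exactly the set of pairs $(b_n,b_{n+1})$, $n\geqslant 0$, together with their flips $(b_{n+1},b_n)$, $n\geqslant 1$. We prove two inclusions.

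\emph{Every listed pair is a solution.} We argue by induction on $n$. The pair $(b_0,b_1)=(1,1)$ lies in $S_4$ by Lemma~\ref{Lemma: diagonalIMPLIESk34}. Suppose $(b_{n-2},b_{n-1})\in S_4$. Then $b_{n-2}$ is a root of $x^2-(4b_{n-1}-1)x+(b_{n-1}^2+b_{n-1})$. By definition $b_n$ is the other root, and Vieta's formulas give
\[
b_n=4b_{n-1}-1-b_{n-2}\in\mathbb{Z},\qquad b_nb_{n-2}=b_{n-1}^2+b_{n-1}>0 .
\]
Hence $b_n$ is a positive integer and $(b_{n-1},b_n)\in S_4$. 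This is exactly the Vieta jump followed by a flip.

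For monotonicity, apply the computation in the Remark with $a=b_{n-1}\geqslant b=b_{n-2}$. It gives $b_n\geqslant b_{n-1}+1$ for $n\geqslant 2$, so $b_1\leqslant b_2<b_3<\cdots$. Flips of solutions are solutions, so the pairs $(b_{n+1},b_n)$ also lie in $S_4$. We exclude $n=0$ from the flipped family only because $(b_1,b_0)=(1,1)$ would be listed twice.

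\emph{Every solution is listed.} We use strong induction on $a+b$. Let $(a,b)\in S_4$ with $a\leqslant b$; the case $a>b$ follows by flipping. If $a=b$, Lemma~\ref{Lemma: diagonalIMPLIESk34} gives $(a,b)=(1,1)=(b_0,b_1)$.

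If $a<b$, apply Theorem~\ref{Lemma: smallerCOORDINATEsum} to the larger coordinate $b$. It produces $b'=4a-1-b$ with $1\leqslant b'\leqslant a$ and $(a,b')\in S_4$ of smaller coordinate sum. Since $b'\leqslant a$, the flip $(b',a)$ satisfies the ordering hypothesis, so by the induction hypothesis $(b',a)=(b_m,b_{m+1})$ for some $m\geqslant 0$. Now $b$ is the other root of the quadratic in $x$ with parameter $a=b_{m+1}$ and known root $b_m$. Therefore $b=4b_{m+1}-1-b_m=b_{m+2}$, and $(a,b)=(b_{m+1},b_{m+2})$, which is of the required form with $n=m+1$.

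The main point needing care is this backward step. The reduced pair must be reordered so that the induction hypothesis applies, and the other root must be unique, which holds because a quadratic has exactly two roots. One must also check the edge case $m=0$: here $(b',a)=(1,1)$, which forces $b=4-1-1=2=b_2$, consistent with the general formula. The closed form \eqref{eq:bn-closed} is not needed for this statement; it only describes the sequence.
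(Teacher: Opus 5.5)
Your proof is correct and follows the paper's route: the descent of Theorem~\ref{Lemma: smallerCOORDINATEsum} down to $(1,1)$, reversed via the Vieta recurrence defining $\{b_n\}$. Your strong induction on $a+b$ also makes explicit what the paper leaves implicit in its ``hence we have shown,'' namely that $b=4a-1-b'$ forces the reversed chain to be exactly $\{b_n\}$. One harmless slip (monotonicity is never used): applied to $(b_{n-1},b_{n-2})$, the Remark's bound yields only $b_n\geqslant b_{n-2}+1$; to get $b_n\geqslant b_{n-1}+1$, use $b_{n-2}\leqslant b_{n-1}$ in the denominator of $b_n=(b_{n-1}^2+b_{n-1})/b_{n-2}$.
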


\section{Proof of Theorem \ref{cor:normalized_values}} \label{section: lasttheorem}

We break the proof into two separate cases: $k \ = \ 3$ and $k \ = \ 4$, corresponding to Theorems~\ref{thm:k3} and~\ref{thm:k4}, respectively, which together establish Theorem~\ref{cor:normalized_values}.
\subsection{Analysis for $k \ = \ 3$} \label{CORk=3} \label{CORk=3}

We express the sequence $\{a_n\}_{n \geqslant 0}$ defined by~\eqref{an:definition} in terms of the Fibonacci numbers $\{F_n\}_{n \geqslant -1}$ and Lucas numbers $\{L_n\}_{n \geqslant 0}$ to study arithmetic properties such as $\gcd(a_n, \, a_{n+1})$.
Recall that Lucas numbers are defined by
\begin{equation}
L_{0} \ = \ 2,
\qquad
L_{1} \ = \ 1,
\qquad
L_{n} \ = \ L_{n-1}\,+\,L_{n-2}
\quad \text{for all } n \geqslant 2.
\end{equation}
Using the identities (see \cite[Chapter 31]{Koshy2019})
\begin{equation}
F_{2n-1} \ = \ F_n^2 \,+\, F_{n-1}^2,
\quad
F_{n+1}F_{n-1}\,-\,F_n^2 \ = \ (-1)^n,
\quad
L_n \ = \ F_{n-1}\,+\,F_{n+1},
\end{equation}
we obtain
\begin{equation}
F_{4n-1} \ = \ F_{2n-1}L_{2n} \,-\, 1,
\qquad
F_{4n-3} \ = \ F_{2n-1}L_{2n-2} \,-\, 1.
\end{equation}
Hence
\begin{equation} \label{formulass}
a_{2n} \ = \ F_{4n-1}+1 \ = \ F_{2n-1}L_{2n},
\qquad
a_{2n-1} \ = \ F_{4n-3}+1 \ = \ F_{2n-1}L_{2n-2}.
\end{equation}

\begin{lemma} \label{claim3}
Let $\{a_n\}_{n \geqslant 0}$ be defined by~\eqref{an:definition} and $d_{n} \ := \ \gcd(a_{n}, \, a_{n+1})$ for all $n \, \geqslant \, 1$.
Then,
\begin{equation}
d_{2n-1} \ = \ F_{2n-1}
\qquad\text{and}\qquad
d_{2n} \ = \ L_{2n}
\qquad
\text{for all } n \geqslant 1.
\end{equation}
\end{lemma}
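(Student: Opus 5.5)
Using the factorizations \eqref{formulass}, the two consecutive pairs share a common factor:
\begin{equation*}
(a_{2n-1},\,a_{2n}) \ = \ \big(F_{2n-1}L_{2n-2},\; F_{2n-1}L_{2n}\big),
\qquad
(a_{2n},\,a_{2n+1}) \ = \ \big(F_{2n-1}L_{2n},\; F_{2n+1}L_{2n}\big),
\end{equation*}
where the second pair uses the formula for $a_{2(n+1)-1}=F_{2n+1}L_{2n}$. Hence
\begin{equation*}
d_{2n-1} \ = \ F_{2n-1}\,\gcd(L_{2n-2},L_{2n}),
\qquad
d_{2n} \ = \ L_{2n}\,\gcd(F_{2n-1},F_{2n+1}),
\end{equation*}
and it suffices to show that both inner gcds equal $1$.

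For the Fibonacci factor, I would use the standard fact $\gcd(F_m,F_{m+1})=1$, which follows from Euclid's algorithm applied to the recurrence, or from Cassini's identity $F_{m+1}F_{m-1}-F_m^2=(-1)^m$ quoted above. Then $\gcd(F_{2n-1},F_{2n+1})=\gcd(F_{2n-1},F_{2n+1}-F_{2n-1})=\gcd(F_{2n-1},F_{2n})=1$.

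For the Lucas factor, the same reduction gives $\gcd(L_{2n-2},L_{2n})=\gcd(L_{2n-2},L_{2n-1})$, so it is enough to show that consecutive Lucas numbers are coprime. The recurrence reduces $\gcd(L_{m},L_{m+1})$ step by step to $\gcd(L_0,L_1)=\gcd(2,1)=1$. Alternatively, the Cassini-type identity $L_{m+1}L_{m-1}-L_m^2=5(-1)^{m+1}$ shows any common divisor divides $5$, and one checks that $5\nmid L_m$.

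The only real subtlety is bookkeeping: \eqref{formulass} was derived for $n\ge1$, and the edge case $n=1$ should be checked directly. We have $a_1=2$, $a_2=3$, $a_3=6$, so $d_1=\gcd(2,3)=1=F_1$ and $d_2=\gcd(3,6)=3=L_2$, which agrees with the statement. Beyond these verifications the argument is routine.
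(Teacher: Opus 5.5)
Your proof is correct and follows the paper's argument essentially step for step. You use \eqref{formulass} to factor out $F_{2n-1}$ (respectively $L_{2n}$), then use the recurrence to reduce $\gcd(L_{2n-2},L_{2n})$ and $\gcd(F_{2n-1},F_{2n+1})$ to the coprimality of consecutive Lucas and Fibonacci numbers. Your extra justification of those coprimality facts (via Euclid's algorithm or the Cassini-type identities) and the sanity check at $n=1$ are fine additions; the paper simply cites the coprimality facts as standard.
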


\begin{proof}
We use the standard facts (see \cite[Chapter 32]{Koshy2019})
\begin{equation*}
\gcd(F_{2n-1},\,F_{2n}) \ = \ 1
\quad\text{and}\quad
\gcd(L_{2n-2},\,L_{2n-1}) \ = \ 1
\qquad \text{for all } n \geqslant 1.
\end{equation*}
We consider two cases.
\begin{itemize}
\setlength\itemsep{0.8em}

\item Odd--indexed of $d_{n}$: Using the formulas~\eqref{formulass} for $a_{2n}$ and $a_{2n-1}$, we obtain
\begin{align*}
d_{2n-1}
&\ = \
\gcd(a_{2n-1}, \, a_{2n})
\\
\\
&\ = \
\gcd(F_{2n-1}L_{2n-2},\,F_{2n-1}L_{2n})
\\
\\
&\ = \
F_{2n-1} \gcd(L_{2n-2}, \, L_{2n})
\\
\\
&\ = \
F_{2n-1} \gcd(L_{2n-2},\,L_{2n-1} \,+\, L_{2n-2})
\\
\\
&\ = \
F_{2n-1} \gcd(L_{2n-2},\,L_{2n-1})
\\
\\
&\ = \
F_{2n-1}.
\end{align*}

\item Even--indexed of $d_{n}$: Similarly,
\begin{align*}
d_{2n}
& \ = \
\gcd(a_{2n},\,a_{2n+1})
\\
\\
& \ = \
\gcd(F_{2n-1}L_{2n},\,F_{2n+1}L_{2n})
\\
\\
& \ = \
L_{2n} \gcd(F_{2n-1},\,F_{2n+1})
\\
\\
& \ = \
L_{2n} \gcd(F_{2n-1},\,F_{2n} \,+\, F_{2n-1})
\\
\\
& \ = \
L_{2n} \gcd(F_{2n-1},\,F_{2n})
\\
\\
& \ = \
L_{2n}. \qedhere
\end{align*}
\end{itemize}
\end{proof}

\begin{lemma} \label{claim4}
Let $\{a_n\}_{n \geqslant 0}$ be defined by~\eqref{an:definition} and $d_{n} \ := \ \gcd(a_{n}, \, a_{n+1})$ for all $n \, \geqslant \, 1$.
Then, $a_{n+1} \ =\ d_nd_{n+1}$ for all $n \, \geqslant \, 1$.
\end{lemma}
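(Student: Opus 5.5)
The plan is to split on the parity of the index $n$ and read the claim off the factorizations \eqref{formulass} together with Lemma~\ref{claim3}. The point is that each $a_m$ with $m \geqslant 1$ is a product of one Fibonacci number and one Lucas number. Lemma~\ref{claim3} identifies the gcds of consecutive terms as exactly these factors.

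\textbf{Case $n = 2m-1$ odd ($m\geqslant 1$).} We must show $a_{2m} = d_{2m-1}d_{2m}$. By \eqref{formulass}, $a_{2m} = F_{2m-1}L_{2m}$. By Lemma~\ref{claim3}, $d_{2m-1} = F_{2m-1}$ and $d_{2m} = L_{2m}$. The product is therefore exactly $a_{2m}$.

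\textbf{Case $n = 2m$ even ($m\geqslant 1$).} We must show $a_{2m+1} = d_{2m}d_{2m+1}$. Applying \eqref{formulass} with $n$ replaced by $m+1$ gives $a_{2m+1} = a_{2(m+1)-1} = F_{2m+1}L_{2m}$. Lemma~\ref{claim3} gives $d_{2m} = L_{2m}$ and $d_{2m+1} = d_{2(m+1)-1} = F_{2(m+1)-1} = F_{2m+1}$. Multiplying again yields $a_{2m+1}$.

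The only point needing care is the index bookkeeping. We must check that the shifted instances of \eqref{formulass} and Lemma~\ref{claim3} stay within their stated ranges, which they do since $m+1\geqslant 2\geqslant 1$. We must also check that every $n\geqslant 1$ falls into exactly one of the two cases, which is immediate. I do not expect a genuine obstacle: the statement is essentially a corollary of Lemma~\ref{claim3}.

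As a sanity check, the values $a_2=3=d_1d_2=1\cdot 3$ and $a_3=6=d_2d_3=3\cdot 2$ agree with the table. If one wanted independence from Lemma~\ref{claim3}, an alternative route is the relation $a_{n+1}a_{n-1}=a_n^2+a_n$ from \eqref{eq:closed3}. I would not pursue it, since the factorization argument is direct.
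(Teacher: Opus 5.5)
Your proposal is correct and is essentially the paper's proof: split by parity, read $a_{2m}=F_{2m-1}L_{2m}$ and $a_{2m+1}=F_{2m+1}L_{2m}$ off \eqref{formulass}, and multiply the gcd values $d_{2m-1}=F_{2m-1}$, $d_{2m}=L_{2m}$, $d_{2m+1}=F_{2m+1}$ from Lemma~\ref{claim3}. Your indexing of the even case as $a_{2m+1}=d_{2m}d_{2m+1}$ with $m\geqslant 1$ is slightly cleaner than the paper's version $a_{2n-1}=d_{2n-2}d_{2n-1}$. At $n=1$ the paper's form would refer to the undefined $d_0$.
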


\begin{proof}
By Lemma~\ref{claim3}, we have $d_{2n-1} \ = \ F_{2n-1}$ and $d_{2n} \ = \ L_{2n}$.
Hence
\begin{equation*}
d_{2n-1}d_{2n}
\ = \
F_{2n-1}L_{2n}
\ = \
a_{2n},
\end{equation*}
and
\begin{equation*}
d_{2n-2}d_{2n-1}
\ = \
L_{2n-2}F_{2n-1}
\ = \
a_{2n-1}.
\end{equation*}
Therefore, $a_{n+1} \ = \ d_n d_{n+1}$ for all $n\geqslant 1$.
\end{proof}

We now prove Theorem~\ref{cor:normalized_values} in the case $k \ = \ 3$.

\begin{theorem}\label{thm:k3}
Let $\{a_n\}_{n \geqslant 0}$ be defined by~\eqref{an:definition}.
Then, for all $n \geqslant 1$, we have
\begin{equation*}
\frac{a_{2n}\,+\,a_{2n+1}}{\gcd(a_{2n},\,a_{2n+1})^2} \ = \ 1,
\qquad
\frac{a_{2n-1}\,+\,a_{2n}}{\gcd(a_{2n-1},\,a_{2n})^2} \ = \ 5.
\end{equation*}
\end{theorem}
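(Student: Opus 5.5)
We use the explicit factorizations~\eqref{formulass} together with Lemmas~\ref{claim3} and~\ref{claim4}, and reduce each case to a standard Fibonacci--Lucas identity.

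\emph{Even-indexed pairs.} Take the pair $(a_{2n}, a_{2n+1})$. By Lemma~\ref{claim3}, the gcd is $d_{2n} = L_{2n}$. By~\eqref{formulass} (the second formula with $n$ replaced by $n+1$),
\[
a_{2n} = F_{2n-1}L_{2n}, \qquad a_{2n+1} = F_{2n+1}L_{2n}.
\]
Hence
\[
\frac{a_{2n}+a_{2n+1}}{d_{2n}^2} = \frac{F_{2n-1}+F_{2n+1}}{L_{2n}}.
\]
The identity $L_m = F_{m-1}+F_{m+1}$, already quoted from \cite{Koshy2019}, gives numerator $L_{2n}$, so the ratio equals $1$.

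\emph{Odd-indexed pairs.} Take the pair $(a_{2n-1}, a_{2n})$. By Lemma~\ref{claim3}, the gcd is $d_{2n-1}=F_{2n-1}$. By~\eqref{formulass},
\[
a_{2n-1}=F_{2n-1}L_{2n-2}, \qquad a_{2n}=F_{2n-1}L_{2n}.
\]
Hence
\[
\frac{a_{2n-1}+a_{2n}}{d_{2n-1}^2} = \frac{L_{2n-2}+L_{2n}}{F_{2n-1}}.
\]
We therefore need the identity $L_{m-1}+L_{m+1}=5F_m$ with $m=2n-1$. This can be proved in either of two ways:
\begin{itemize}
\item substitute $L_j=F_{j-1}+F_{j+1}$ and collect terms, $F_{m-2}+2F_m+F_{m+2} = (F_m-F_{m-1})+2F_m+(2F_m+F_{m-1})=5F_m$;
\item use Binet's formula, where $\varphi+\varphi^{-1}=\sqrt5$ multiplies through.
\end{itemize}
Applying it gives $5F_{2n-1}/F_{2n-1}=5$.

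\emph{Expected obstacle.} There is little difficulty beyond bookkeeping. The main care point is index ranges: the formulas~\eqref{formulass} and Lemma~\ref{claim3} must hold for the indices used. For $n\ge1$ we need $a_{2n+1}=F_{2n+1}L_{2n}$, which is the second formula of~\eqref{formulass} at $n+1$ and is valid for $n\ge0$, so nothing breaks. As a sanity check:
\begin{itemize}
\item $n=1$, pair $(a_2,a_3)=(3,6)$: $9/9=1$.
\item $n=1$, pair $(a_1,a_2)=(2,3)$: $5/1=5$.
\end{itemize}
Both agree with the stated values.
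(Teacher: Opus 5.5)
Your proof is correct and is essentially the paper's argument. The paper rewrites \eqref{formulass} as $a_{n+1}=d_nd_{n+1}$ (Lemma~\ref{claim4}) and reduces to $(d_{n-2}+d_n)/d_{n-1}$, which gives exactly your two quotients $(F_{2n-1}+F_{2n+1})/L_{2n}$ and $(L_{2n-2}+L_{2n})/F_{2n-1}$; it then evaluates them by the same Fibonacci manipulations. Reading the factors straight off \eqref{formulass}, as you do, also avoids the paper's implicit use of $d_0=L_0$ when $n=1$, since Lemma~\ref{claim3} is only stated for indices $\geqslant 1$.
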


\begin{proof} \label{proofk=3}
By Lemmas~\ref{claim3} and~\ref{claim4}, we have $d_{2n-1} \ = \ F_{2n-1}$, $d_{2n} \ = \ L_{2n}$ and $a_{n+1} \ = \ d_{n} d_{n+1}$.
Then
\begin{equation} \label{STAR}
\frac{a_{n-1}\,+\,a_n}{\gcd(a_{n-1},\,a_n)^2}
\ = \
\frac{ d_{n-1} d_{n-2} \,+\, d_{n} d_{n-1} }{ d_{n-1}^{2} }
\ = \
\frac{ d_{n-2} \,+\, d_{n} }{ d_{n-1} }.
\end{equation}
We proceed by considering even and odd indices.
Upon replacing $n$ with $2n+1$ in~\eqref{STAR}, we obtain
\begin{equation*}
\frac{a_{2n}\,+\,a_{2n+1}}{\gcd(a_{2n},\,a_{2n+1})^2}
\ = \  \frac{d_{2n-1}\,+\,d_{2n+1}}{d_{2n}}
\ = \  \frac{F_{2n-1}\,+\,F_{2n+1}}{L_{2n}} \ = \  1.
\end{equation*}
Upon replacing $n$ with $2n$ in~\eqref{STAR}, we obtain
\begin{align*}
& \frac{a_{2n-1}\,+\,a_{2n}}{\gcd(a_{2n-1},\,a_{2n})^2}
\ = \ \frac{d_{2n-2}\,+\,d_{2n}}{d_{2n-1}}
\ = \ \frac{L_{2n-2}\,+\,L_{2n}}{F_{2n-1}}
\\
\\
& \ = \ \frac{F_{2n-3} \,+\, F_{2n-1} \,+\, F_{2n-1} \,+\, F_{2n+1}}{F_{2n-1}}
\\
\\
& \ = \ \frac{F_{2n-3} \,+\, 2 F_{2n-1} \,+\, ( F_{2n} \,+\, F_{2n-1} )}{F_{2n-1}}
\\
\\
& \ = \ \frac{F_{2n-3} \,+\, 3 F_{2n-1} \,+\, ( F_{2n-1} \,+\, F_{2n-2} )}{F_{2n-1}}
\\
\\
& \ = \ \frac{( F_{2n-3} \,+\, F_{2n-2} ) \,+\, 4 F_{2n-1} }{F_{2n-1}} \ = \
\frac{F_{2n-1} \,+\, 4 F_{2n-1} }{F_{2n-1}} \ = \  \frac{5F_{2n-1}}{F_{2n-1}} \ = \  5. \qedhere
\end{align*}
\end{proof}

\subsection{Analysis for $k \ = \ 4$} \label{CORk=4}

The sequence $\{b_n\}_{n\geqslant 0}$ defined by~\eqref{eq:bn-closed}, arising from the iteration of Vieta’s formulas, appears in the OEIS~\cite{OEIS} as sequence A101879.
The sequence $d_n \ := \ \gcd(b_n, \, b_{n+1}), \, n \geqslant 0$ is listed as A005246.
Its odd--indexed and even--indexed subsequences are OEIS sequences A001075 and A001835, respectively.
Both the identity
\begin{equation}\label{eq:bn-dn}
b_{n+1} \ = \ d_{n+1} d_n \qquad \text{for all } n \; \geqslant \; 0
\end{equation}
and the closed--form expressions for $d_{2n-1}$ and $d_{2n}$,
\begin{equation} \label{d2n-1}
d_{2n-1}
\ = \
\frac{3-\sqrt{3}}{6} \alpha^{n}
\;+\;
\frac{3+\sqrt{3}}{6} \beta^{n} \qquad \text{for all } n \; \geqslant \; 1
\end{equation}
and
\begin{equation} \label{d2n}
d_{2n}
\ = \
\frac{\alpha^{n} \;+\; \beta^{n}}{2} \qquad \text{for all } n \; \geqslant \; 1
\end{equation}
are recorded in OEIS entry A005246 and its references.

We now prove Theorem~\ref{cor:normalized_values} in the case $k \ = \ 4$.

\begin{theorem} \label{thm:k4}
Let $\{b_n\}_{n\geqslant 0}$ be defined by~\eqref{eq:bn-closed}.
Then, for all $n \geqslant 1$, we have
\begin{equation*}
\frac{b_{2n}\,+\,b_{2n+1}}{\gcd(b_{2n},\,b_{2n+1})^2} \ = \ 2,
\qquad
\frac{b_{2n-1}\,+\,b_{2n}}{\gcd(b_{2n-1},\,b_{2n})^2} \ = \ 3.
\end{equation*}
\end{theorem}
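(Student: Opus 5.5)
We follow the $k=3$ argument in the proof of Theorem~\ref{thm:k3}, now using the data \eqref{eq:bn-dn}, \eqref{d2n-1} and \eqref{d2n} with $\alpha=2+\sqrt3$ and $\beta=2-\sqrt3$. We do not want to rely on the OEIS, so we first establish \eqref{eq:bn-dn} and the closed forms for $d_n$ ourselves.

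\textbf{Step 1: a factorisation of $b_n$.} Define $e_{2n}:=(\alpha^n+\beta^n)/2$ and $e_{2n-1}:=\tfrac{3-\sqrt3}{6}\alpha^n+\tfrac{3+\sqrt3}{6}\beta^n$. These are integer sequences with $e_0=1$, $e_1=1$, $e_2=2$, $e_3=3$, $\dots$. Using $\alpha\beta=1$, we expand $e_ne_{n+1}$ in each parity and compare with \eqref{eq:bn-closed}. For example, one has
\[
e_{2n}e_{2n+1}=\tfrac{3-\sqrt3}{12}\alpha^{2n+1}+\tfrac{3+\sqrt3}{12}\beta^{2n+1}+\tfrac12 ,
\]
because the cross terms add up to $\tfrac{3-\sqrt3}{12}+\tfrac{3+\sqrt3}{12}=\tfrac12$ and the pure terms combine via $\tfrac{3-\sqrt3}{6}\cdot\tfrac12\alpha^{2n}\cdot\alpha$ after using $\alpha^{n+1}\alpha^{n}=\alpha^{2n+1}$. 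This gives $b_{n+1}=e_ne_{n+1}$ for all $n\ge0$.

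\textbf{Step 2: identifying the gcd.} From Step 1, $\gcd(b_n,b_{n+1})=e_n\gcd(e_{n-1},e_{n+1})$. It remains to show $\gcd(e_{n-1},e_{n+1})=1$. The closed forms give linear relations:
\begin{itemize}
\item $e_{2n+1}=e_{2n}+e_{2n-1}$,
\item $e_{2n+2}=2e_{2n+1}+e_{2n}$.
\end{itemize}
Both can be checked from the closed forms, or verified on small values and then by the recurrence $x_{n+1}=4x_n-x_{n-1}$ satisfied by each parity subsequence. These relations reduce $\gcd(e_{n-1},e_{n+1})$ to $\gcd(e_{n-1},e_n)$, or to $\gcd(e_{n-1},2e_n)$. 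Consecutive terms are coprime by a Euclidean descent down to $e_0=e_1=1$. The factor $2$ is harmless because every odd-indexed $e$ is odd: reducing the relations mod $2$ gives $e_{2n+1}\equiv e_{2n}+e_{2n-1}$ and $e_{2n+2}\equiv e_{2n}$, so $e_{2n}\equiv1$ and $e_{2n+1}\equiv n+1 \pmod 2$.

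Hence $d_n=e_n$, which confirms \eqref{eq:bn-dn}. I expect this parity and coprimality bookkeeping to be the main obstacle. If the factor $2$ causes trouble, the fallback is to bound $\gcd(e_{n-1},e_{n+1})$ directly via the Pell-type identities $e_{2n}^2-3e_{2n-1}^2\cdot(\ldots)=\pm1$, obtained from $\alpha\beta=1$.

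\textbf{Step 3: computing the ratio.} As in \eqref{STAR}, the identity $b_{m+1}=d_md_{m+1}$ gives
\[
\frac{b_{m-1}+b_m}{d_{m-1}^2}=\frac{d_{m-2}+d_m}{d_{m-1}} .
\]
\begin{itemize}
\item For $m=2n+1$ the numerator is $d_{2n-1}+d_{2n+1}$. From the closed forms,
\[
\tfrac{3-\sqrt3}{6}(\alpha^n+\alpha^{n+1})=\tfrac{3-\sqrt3}{6}\alpha^n(3+\sqrt3)=\alpha^n,
\]
and the conjugate term behaves the same way, so the numerator equals $\alpha^n+\beta^n=2d_{2n}$. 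The ratio is $2$.
\item For $m=2n$ the numerator is $d_{2n-2}+d_{2n}=\tfrac12\big(\alpha^{n-1}(1+\alpha)+\beta^{n-1}(1+\beta)\big)$. Since $1+\alpha=3+\sqrt3=3(3-\sqrt3)\alpha/6$ and the conjugate identity holds for $\beta$, the numerator equals $3d_{2n-1}$. The ratio is $3$.
\end{itemize}

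The index $n=1$ involves $d_0=1$, which is consistent with the formula $e_0=(\alpha^0+\beta^0)/2=1$. The small cases $(1,2)$ and $(2,6)$ give the values $3$ and $2$ respectively, which serve as a numerical check.
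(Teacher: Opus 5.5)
Your Step 3 is essentially the paper's proof: the paper takes \eqref{eq:bn-dn}, \eqref{d2n-1} and \eqref{d2n} from the OEIS and computes $(d_{m-2}+d_m)/d_{m-1}$ in the two parities. Your Step 1 is a correct proof of $b_{n+1}=e_ne_{n+1}$, which the paper does not supply.

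The gap is in Step 2. Once you decline to cite the OEIS, this step carries real weight, because it is what identifies the denominator $\gcd(b_n,b_{n+1})$. Both linear relations you state are false. With $e_0,\dots,e_5=1,1,2,3,7,11$, one has $e_5=11\neq 10=e_4+e_3$ and $e_4=7\neq 8=2e_3+e_2$. Your parity claims are also wrong:
\begin{itemize}
\item $e_2=2$ is even, which contradicts $e_{2n}\equiv 1$;
\item $e_3=3$ is odd, which contradicts $e_{2n+1}\equiv n+1 \pmod 2$. That congruence also contradicts your own assertion that all odd-indexed terms are odd.
\end{itemize}
So neither the reduction of $\gcd(e_{n-1},e_{n+1})$ to a gcd of consecutive terms nor the Euclidean descent is justified as written. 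The correct three-term relations are the ones your Step 3 actually establishes: $e_{2n+1}=2e_{2n}-e_{2n-1}$ and $e_{2n}=3e_{2n-1}-e_{2n-2}$.

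The repair is short and needs no parity bookkeeping. For $n\geqslant 1$, the pair $(e_{n-1},e_{n+1})$ consists of two consecutive terms of one of the subsequences $(e_{2j})_{j\geqslant 0}=1,2,7,26,\dots$ or $(e_{2j+1})_{j\geqslant 0}=1,3,11,41,\dots$. Each of these satisfies $x_{j+1}=4x_j-x_{j-1}$, because $\alpha$ and $\beta$ are the roots of $x^2-4x+1$. Hence $\gcd(x_j,x_{j+1})=\gcd(x_{j-1},x_j)=\cdots=\gcd(x_0,x_1)$, which is $\gcd(1,2)=1$ or $\gcd(1,3)=1$. This gives $\gcd(b_n,b_{n+1})=e_n$, and your Step 3 then finishes the proof.

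There is one small slip in Step 3. In the second bullet the identity should read $1+\alpha=(3-\sqrt3)\alpha$, equivalently $\tfrac12(1+\alpha)=3\cdot\tfrac{3-\sqrt3}{6}\alpha$. Your version is off by a factor of $2$, although the conclusion $d_{2n-2}+d_{2n}=3d_{2n-1}$ is correct.
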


\begin{proof}
From~\eqref{eq:bn-dn}, \eqref{d2n-1}, and~\eqref{d2n}, we obtain
\begin{equation} \label{STAR2}
\frac{ b_{n-1} \,+\, b_{n} }{ \gcd(b_{n-1}, \, b_{n})^{2} } \ = \ \frac{ d_{n-1} d_{n-2} \,+\, d_{n} d_{n-1} }{ d_{n-1}^{2} } \ = \ \frac{ d_{n-2} \,+\, d_{n} }{ d_{n-1} }.
\end{equation}
We now proceed by considering even and odd indices separately.
Upon replacing $n$ with $2n+1$ in~\eqref{STAR2}, we obtain
\begin{align*}
\frac{ b_{2n} \,+\, b_{2n+1} }{ \gcd(b_{2n},\, b_{2n+1})^{2} }
\ = \
\frac{ d_{2n-1} \,+\, d_{2n+1} }{ d_{2n} }
\ = \
\frac{ \left( \frac{3 - \sqrt{3}}{6} \alpha^{n} + \frac{3 + \sqrt{3}}{6} \beta^{n} \right) + \left( \frac{3 - \sqrt{3}}{6} \alpha^{n+1} + \frac{3 + \sqrt{3}}{6} \beta^{n+1} \right) }{ \frac{1}{2} \alpha^{n} + \frac{1}{2} \beta^{n} }
\\
\\
\ = \ 2 \frac{ \left( \frac{3 - \sqrt{3}}{6} \alpha^{n} + \frac{3 + \sqrt{3}}{6} \beta^{n} \right) + \left( \frac{3 + \sqrt{3}}{6} \alpha^{n} + \frac{3 - \sqrt{3}}{6} \beta^{n} \right) }{ \alpha^{n} + \beta^{n} }
\ = \
2 \frac{ \alpha^{n} + \beta^{n} }{ \alpha^{n} + \beta^{n} }
\ = \ 2.
\end{align*}
Upon replacing $n$ with $2n$ in~\eqref{STAR2}, we obtain
\begin{align*}
& \frac{ b_{2n-1} \,+\, b_{2n} }{ \gcd{( b_{2n-1} , \, b_{2n} )}^{2} } \, = \, \frac{d_{2n-2} \,+\, d_{2n}}{d_{2n-1}}
\ = \
\frac{ \left( \frac{1}{2} \alpha^{n-1} + \frac{1}{2} \beta^{n-1} \right) + \left( \frac{1}{2} \alpha^{n} + \frac{1}{2} \beta^{n} \right) }{ \frac{3 - \sqrt{3}}{6} \alpha^{n} + \frac{3 + \sqrt{3}}{6} \beta^{n} }
\\
\\
&
\ = \ \frac{ \left( \frac{1}{2} \alpha^{n-1} + \frac{1}{2} \beta^{n-1} \right) + \left( \frac{2+\sqrt{3}}{2} \alpha^{n-1} + \frac{2-\sqrt{3}}{2} \beta^{n-1} \right) }{ \frac{3 + \sqrt{3}}{6} \alpha^{n-1} + \frac{3 - \sqrt{3}}{6} \beta^{n-1} }
\\
\\
&
\ = \ \frac{ \frac{3 + \sqrt{3}}{2} \alpha^{n-1} + \frac{3 - \sqrt{3}}{2} \beta^{n-1} }{ \frac{3 + \sqrt{3}}{6} \alpha^{n-1} + \frac{3 - \sqrt{3}}{6} \beta^{n-1} } \ = \ \frac{1/2}{1/6} \ = \ 3.\qedhere
\end{align*}
\end{proof}

To summarize Theorems~\ref{thm:k3} and~\ref{thm:k4}, we collect the results in the following theorem.

\begin{theorem}[Theorem~\ref{cor:normalized_values}]
Let $(a,\,b)$ be a positive integer solution of
\begin{equation*}
\frac{a+1}{b}\,+\,\frac{b+1}{a}\ = \ k.
\end{equation*}
Then
\begin{equation*}
\frac{a+b}{\gcd(a,b)^2}
\,\in\,\{1,\,2,\,3,\,5\}.
\end{equation*}
Moreover,
\begin{itemize}
\item if $k \ = \ 3$, the values are $1$ and $5$;
\item if $k \ = \ 4$, the values are $2$ and $3$.
\end{itemize}
\end{theorem}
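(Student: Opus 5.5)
The plan is to assemble the earlier case analyses, using the classification from Theorem~\ref{thm:main} to reduce to consecutive pairs. By Theorem~\ref{thm:main} (via Theorem~\ref{Lemma: smallerCOORDINATEsum}), $k\in\{3,4\}$. The quantity $\frac{a+b}{\gcd(a,b)^2}$ is symmetric in $a$ and $b$, so we may assume $a\leqslant b$. Theorems~\ref{thmS3} and~\ref{thmS4} then say that $(a,b)=(a_n,a_{n+1})$ when $k=3$ and $(a,b)=(b_n,b_{n+1})$ when $k=4$, for some $n\geqslant 0$. It therefore suffices to evaluate the quantity on consecutive terms of these two sequences.

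For consecutive pairs with $n\geqslant 1$, Theorem~\ref{thm:k3} applies in both parities. The pair $(a_{2m},a_{2m+1})$ gives $1$ and the pair $(a_{2m-1},a_{2m})$ gives $5$, for all $m\geqslant 1$, which covers every $n\geqslant 1$. Likewise Theorem~\ref{thm:k4} gives $2$ for $(b_{2m},b_{2m+1})$ and $3$ for $(b_{2m-1},b_{2m})$, for all $m\geqslant 1$.

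The remaining step is the boundary index $n=0$, which those theorems exclude because Lemmas~\ref{claim3}--\ref{claim4} and~\eqref{eq:bn-dn} are stated only from $n\geqslant 1$ (respectively, with $d_{n-2}$ requiring $n\geqslant 2$). I will check these cases directly. For $k=3$ the pair is $(a_0,a_1)=(2,2)$, giving $4/4=1$. For $k=4$ the pair is $(b_0,b_1)=(1,1)$, giving $2/1=2$. Both values already lie in the claimed sets for their respective $k$.

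I expect the main obstacle to be bookkeeping: making sure the index ranges in Theorems~\ref{thm:k3} and~\ref{thm:k4} together with the base cases exhaust all $n\geqslant 0$. For the existence statement, note that each listed value is attained for its $k$: for example $(2,3)$ gives $5$, $(2,2)$ gives $1$, $(1,1)$ gives $2$ and $(1,2)$ gives $3$. Hence the value set for $k=3$ is exactly $\{1,5\}$ and for $k=4$ exactly $\{2,3\}$, and the union is $\{1,2,3,5\}$.
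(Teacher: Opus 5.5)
Your proposal is correct and follows essentially the paper's route: the paper's proof simply combines Theorem~\ref{thm:k3} and Theorem~\ref{thm:k4} with the classification of $S_3$ and $S_4$ into consecutive pairs $(a_n,a_{n+1})$ and $(b_n,b_{n+1})$. You add two things the paper leaves implicit: a direct check of the index-$0$ pairs $(2,2)$ and $(1,1)$, which those theorems (stated for $n\geqslant 1$) do not formally cover, and explicit witnesses showing each value is attained.
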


\section{Limitations of Vieta Jumping and Open Problems}
\label{sec:limitations_open}

\subsection{Difficulties for general $r$}

Let $(a, \, b)$ be a positive integer solution of
\begin{equation} \label{general-r}
\frac{a+r}{b}\,+\,\frac{b+r}{a} \ = \ k, \qquad r \in \mathbb{Z}_{\geqslant 2}.
\end{equation}

We first determine the diagonal solutions of~\eqref{general-r}.
If $a \ = \ b$, then~\eqref{general-r}
becomes $2\,+\,2r/a \ = \ k$.
Hence there are no solutions when $k \ = \ 1$ or $k \ = \ 2$, since the left-hand side is strictly greater than $2$.
Solving for $a$, we obtain $a \ = \ \tfrac{2r}{k-2}$.
Thus diagonal solutions exist precisely when $(k\,-\,2)$ divides $2r$.

Now suppose $(a, \, b)$ is a non-diagonal solution with $a>b$. Then $a \, \geqslant \, b \, + \, 1$.
Consider the quadratic equation $x^2 \,-\, (kb-r)x \,+\, (b^2+rb) \ = \ 0$. Since $x \ = \ a$ is a root, let $a'$ denote the other root.
By Vieta's formulas, $a \,+\, a' \ = \ kb \,-\, r$ and $aa' \ = \ b^2 \,+\, rb$.
Thus $a' \ = \ kb \,-\, r \,-\, a$ is again a positive integer.
Using $a \, \geqslant \, b \,+\, 1$, we obtain
\begin{equation} \label{a'b+r}
a'
\ = \
\frac{b^2+rb}{a}
\ = \
b \frac{b + 1}{a} \;+\; (r-1) \frac{b}{a}
\; \leqslant \;
b \;+\; (r-1) \frac{b}{a}
\; < \;
b \;+\; (r\,-\,1).
\end{equation}
The problem is that we no longer have a proof that $a'\,<\,a$ once $r \, \geqslant \, 3$, so the descent argument breaks down.

\medskip

The following remark shows that the descent argument of Theorem~\ref{Lemma: smallerCOORDINATEsum} still applies when $r \ = \ 2$.

\begin{remark}
Suppose $r \ = \ 2$.
Since $a \, > \, b$ and $a \, \geqslant \, b \,+\, 1$, then~\eqref{a'b+r} gives
\begin{equation}
a' \ = \ \frac{b^{2} + 2 b}{a}
\ = \ b \frac{b + 1}{a} \;+\; \frac{b}{a} \, < \,  b \;+\; 1 \, \leqslant \, a.
\end{equation}
Hence every non-diagonal solution $(a, \, b)$ produces a new solution $(a', \, b)$ with strictly smaller coordinate sum.

We now determine the diagonal solutions. If $(a, \, a)$ is a solution, then $a \ = \ \tfrac{4}{k-2}$. Thus $(k\,-\,2)$ divides $4$, so $k\,-\,2\,\in\,\{1,\,2,\,4\}$. Therefore $k \, \in \, \{ 3, \, 4, \, 6 \}$.
Hence the only diagonal solutions are $(1, \, 1)$ (corresponding to $k \ = \ 6$), $(2, \, 2)$ (corresponding to $k \ = \ 4$) and $(4, \, 4)$ (corresponding to $k \ = \ 3$).

Applying the same descent argument as in Theorem~$\ref{Lemma: smallerCOORDINATEsum}$, every non-diagonal solution generates a chain of solutions ending in a diagonal solution via repeated applications of the Vieta jumps
\begin{equation}
(a, \, b) \longmapsto (kb \,-\,2\,-\,a , \, b) \qquad \text{or} \qquad (a, \, b) \longmapsto (a , \, ka \,-\,2\,-\,b)
\end{equation}
together with the flip
\begin{equation}
(a, \, b) \longmapsto (b , \, a).
\end{equation}

Consequently, when $r \ = \ 2$, the only possible values of $k$ are $3$, $4$ and $6$.
\end{remark}

\begin{conjecture}
Let $(a,\,b)$ be a positive integer solution of
\begin{equation*}
\frac{a+2}{b}\,+\,\frac{b+2}{a}\ = \ k.
\end{equation*}
Then
\begin{equation*}
\frac{2(a+b)}{\gcd(a,b)^2}
\,\in\,\left\{ 1 , \; 2 , \; 3 , \; 4 , \; 5  , \; 8 \right\}.
\end{equation*}
Moreover,
\begin{itemize}\itemsep0.5em
\item if $k \ = \ 3$, the values are $1$ and $5$;
\item if $k \ = \ 4$, the values are $2$ and $3$;
\item if $k \ = \ 6$, the values are $4$ and $8$.
\end{itemize}
\end{conjecture}

\subsection{Failure in the three--variable case}

Consider the generalization to
\begin{equation}\label{eq:m3}
\frac{a+r}{b}\;+\;\frac{b+r}{c}\;+\;\frac{c+r}{a} \ = \ k,
\qquad a, \, b, \, c, \, r, \, k \in \mathbb{Z}_{>0}.
\end{equation}
Fixing $b, \, c$ yields a quadratic in $a$ with root $a'$ satisfying
\begin{equation}\label{eq:vieta3}
a' \ = \ b\!\left(k-\frac{b+r}{c}\right)\,-\,r\,-\,a,
\qquad
aa' \ = \ b(c+r),
\end{equation}
and $a'$ need not be integral. Thus the recursive structure of the two--variable case can break down.

\begin{example}
Let $r \ = \ 4$, $k \ = \ 7$. Applying Vieta jumping to the solution $(66, \, 48, \, 352)$ does not preserve integrality, as it produces non-integer triples such as
\begin{equation*}
\left( \frac{2848}{11}, \, 48, \, 352 \right) ,
\qquad
\left( 66, \, \frac{39}{4}, \, 352 \right) ,
\qquad
\left( 66, \, 48, \, \frac{1540}{3} \right).
\end{equation*}
This motivates the study of rational solutions.
\end{example}

\begin{example}
Let $r \ = \ 2$, $k \ = \ 7$. No symmetric positive integer solution $(s, \, s, \, s)$ exists. Among integer solutions, $(7, \, 18, \, 40)$ has minimal sum (verified computationally using Python code).
\end{example}

These examples show that Vieta jumping may fail to preserve integrality, symmetric minima may not exist, and solutions can split into disjoint families not connected by Vieta jumps.

\subsection{Generalizations and open problems}

For $r, \, k\in\mathbb{Z}_{>0}$ and $m \, \geqslant \, 2$, instead of restricting to the two-- or three--variable cases, we consider the problem of finding positive integers
$a_{1}, \, a_{2}, \, \ldots , \, a_{m}$ satisfying
\begin{equation}
\frac{a_{1}+r}{a_{2}} \,+\, \frac{a_{2}+r}{a_{3}} \,+\, \cdots \,+\, \frac{a_{m}+r}{a_{1}} \ = \ k.
\end{equation}
While the case $r \ = \ 1$ and $m \ = \ 2$ is completely understood, the general problem for $r \, \geqslant \, 2$ and $m \, \geqslant \, 3$ remains open. Key questions include finiteness, effective bounds, and whether Vieta jumping generates new integral solutions.

A natural extension replaces $a_i\,+\,r$ by polynomials:
\begin{equation}\label{eq:poly_general}
\sum\limits_{i=1}^{m}\frac{f_i(a_i)}{a_{i+1}} \ = \ k,
\qquad f_i \in \mathbb{Z}[x].
\end{equation}
Another open case is the symmetric equation
\begin{equation}
\frac{a+r}{b}\;+\;\frac{a+r}{c}
\;+\;
\frac{b+r}{a}\;+\;\frac{b+r}{c}
\;+\;
\frac{c+r}{a}\;+\;\frac{c+r}{b}
\ = \ k,
\end{equation}
whose arithmetic structure remains poorly understood.

\medskip
\noindent MSC2020: 11B39, 33C05


\end{document}